\newtheorem{thm}{Theorem}
\newtheorem{lem}[thm]{Lemma}
\newtheorem{cor}[thm]{Corallary}
\newcommand\lemref[1]{{Lemma \ref{#1}}}
\newcommand\thmref[1]{{Theorem \ref{#1}}}
\newcommand\secref[1]{{Section \ref{#1}}}
\newcommand\figref[1]{{Fig.~\ref{#1}}}
\newcommand\tabref[1]{{Table \ref{#1}}}
\newcommand\paren[1]{{\left({#1}\right)}}
\newcommand\bigparen[1]{{\big({#1}\big)}}
\renewcommand\brace[1]{{\left\{{#1}\right\}}}
\renewcommand\brack[1]{{\left[{#1}\right]}}
\newcommand\inner[2]{{\left({{#1},{#2}}\right)_V}}
\newcommand\abs[1]{{\left|{#1}\right|}}
\newcommand\norm[1]{{\left\|{#1}\right\|}}
\newcommand\Set[1]{{\left\{{#1}\right\}}}
\newcommand\xarrow[1]{{\xrightarrow{#1}}}
\newcommand\toas{{\xarrow{\text{a.s.}}}}
\newcommand{\real}{}
\newcommand\indiam{{\rho}}
\newcommand\diam{{h}}
\newcommand\regconst{C}
\DeclareMathOperator*{\esssup}{ess\,sup}
\DeclareMathOperator*{\Diam}{diam}
\newcommand\f{\frac}
\newcommand\p{\partial}
\newcommand\y{\eta}
\newcommand\e{l}
\newcommand\ML{{\scriptsize\mbox{ML}}}
\newcommand\PML{{\scriptsize\mbox{P}}}
\newcommand\MC{{\scriptsize\mbox{MC}}}
\newcommand\EMC[1]{{E^\MC_{#1}}}
\newcommand\EML[1]{{E^\ML_{#1}}}
\newcommand\EPML[1]{{E^\PML_{#1}}}
\newcommand\vertex{{{a}}}
\newcommand\midpoint{{{m}}}
\newcommand\weight{{{w}}}
\newcommand\C{{{\mathcal C}}}
\newcommand\D{{e}}
\newcommand\B{{\mathcal B}}
\newcommand\T{{\mathcal T}}
\newcommand\cP{{\mathcal P}}
\newcommand\cE{{\mathcal E}}
\newcommand\cF{{\mathcal F}}
\newcommand\E{{\mathbb E}}
\newcommand\V{{\mathbb V}}
\newcommand\Var{{\V}}
\newcommand\Om{{\Omega}}
\newcommand\om{{\omega}}
\renewcommand\H{{\mathcal H}}
\renewcommand\P{{\mathbb P}}
\newcommand\R{{\mathbb R}}
\newcommand\N{{\mathbb N}}
\newcommand\bigO{{\mathcal O}}
\newcommand\eps{{\varepsilon}}
\newcommand\teps{{\tilde\varepsilon}}
\newcommand\hX{{\widehat X}}
\newcommand\dx{{\Delta x}}
\newcommand\Grad{\nabla}
\newcommand\Div{\nabla\cdot}
\newcommand\DistMesh{{\bf DistMesh\/}}
\newcommand\Matlab{{\bf Matlab\/}}
\begin{document}

\title[PMLMC for PDE with Random Input Data]{Projected Multilevel Monte Carlo Method for PDE with Random Input Data}

\author{Myoungnyoun Kim}
\address[Myoungnyoun Kim]{National Institute for Mathematical Sciences, Daejeon, Korea}
\email{gmnkim@nims.re.kr}
\author{Imbo Sim}
\address[Imbo Sim]{National Institute for Mathematical Sciences, Daejeon, Korea}
\email{imbosim@nims.re.kr}

\begin{abstract}
The order of convergence of the Monte Carlo method is $1/2$ which means that
we need quadruple samples to decrease the error in half in
the numerical simulation. 
Multilevel Monte Carlo methods reach the same
order of error by spending less computational time
than the Monte Carlo method.
To
reduce the computational complexity further,
we introduce
a projected multilevel Monte Carlo method.
Numerical experiments validate our theoretical
results.
\keywords{ multilevel Monte Carlo  \and projected multilevel Monte Carlo  
\and computational complexity \and finite elements}
\end{abstract}

\maketitle

\section{Introduction}
\label{sec:intro}

The partial differential equation (PDE) with random input data 
\cite{eeu:2007,fst:2005,gwz:2014,hl:2015,mk:2005}
takes a part of
the stochastic partial differential equation (SPDE)
which describes
the problem
with uncertain inputs \cite{gs:1991,mnst:2013,mnst:2014}
as follows
\begin{equation}\label{eq:SPDE}
\left\{
\begin{array}{l}
{\mathcal L}(x;\om) u(x;\om) = {\mathcal F}(x;\om),\quad x\in D,\quad \om \in \Om,
\\
\\
{\mathcal B}(x;\om) u(x;\om) = {\mathcal G}(x;\om), \quad x\in \p D,\quad \om \in \Om.
\end{array}
\right.
\end{equation}
Here, ${\mathcal L}$, ${\mathcal B}$, ${\mathcal F}$, and ${\mathcal G}$ 
are the stochastic partial differential operator, boundary operator, forcing term, 
and boundary value term in the spatial domain $D$ with its boundary $\p D$
and the range $\Om$ of input uncertainties, see 
\cite{BTZ:2004,BTZ:2005} 
for
the stochastic formulation of elliptic boundary value problems.
To ensure  the regularity of the solution of \eqref{eq:SPDE}, 
Babu\v{s}ka et al.\
\cite{BTZ:2004,BTZ:2005}
assume that 
the diffusion coefficients in $C^1(D)$
are bounded, uniformly coercive in the convex domain $D$
and their  first derivatives are bounded almost surely for
 stochastic elliptic PDEs.
We represent the uncertainties of the problem
by a random variable $\om$, which
follows some known or
unknown distribution of the probability. 
We can regard a SPDE as a problem depending on some parameters
which take certain values in finite ranges. In this parametrization context, 
\eqref{eq:SPDE}
 is a parametrized PDE (P$^2$DE) 
 \cite{asb:1978,balmes:1996,br:1995,ks:2011} 
 and
 one wants to simulate faster for a sequence of input data
 using the information on solutions at specially chosen parameters
 \cite{bmnp:2004,mpt:2002,ps:2010}.
  Both views have their own benefits to carry on
  and difficulties to cope with.
We adopt the view of a SPDE, which means that
we need to find out the mean of many solutions
corresponding to samples. 
Note that the total
cost of the computation is the number of samples  times the cost of
solving a PDE with fixed inputs after proper discretization,
see 
\cite{CGST:2011} 
for a detailed description on the 
cost to compute one sample.
  
  A simple 
  method to obtain the mean of  solutions is
  the Monte Carlo (MC) method, which requires 
  quadruple samples to reduce the current error in half.
   To reduce the cost while keeping the almost same 
  order of convergence, 
   variance reduction 
   techniques
    are suggested and studied, see 
    \cite{glasserman:2003}
    for many variants of them, like control variate, antithetic variate, 
   and so on. 
 Among them,
   the control variate introduces a variate whose mean is known. 
   When the correlation between
   the solution and the variate is good enough, 
   the optimal multiplier to reduce the variance
    is estimated by a few samples.
   In the two level MC method, 
   the selected variate is the difference between the solutions at  fine and coarse grids.
   The multiplier is just one. This can be extended to the case with
   more levels by observing that the telescoping
   sum of differences 
  between two consecutive solutions
   makes the finest solution.
   In this sense, the method gets its name, the Multilevel Monte Carlo (MLMC) method. 
  Note that we use the same sample to obtain the difference of approximations
   at  two consecutive grids, see 
   \cite{heinrich:2001} 
   for the idea 
   and the error estimation of
   the MLMC method
   applied to the parametric integration.
   There are many 
   results on MLMC methods applied to
   path simulations \cite{giles:2008},
   elliptic PDEs with random coefficients \cite{BSZ:2011,CST:2013,CGST:2011},
   stochastic elliptic multiscale PDEs \cite{abs:2013},
   parabolic SPDEs \cite{BLS:2013}.
   An extension of the MLMC method is derived by
   Giles et al.\ 
   \cite{GS:2014} 
   using 
  the antithetic variate method,
   which is successfully applied to
   eliminate L\'evy area simulation
   during the estimation of the payoff using the first order Milstein approximation.
    
      Stochastic collocation (SC) methods 
      \cite{BNT:2007,BNT:2010,ntw:2008:aniso,ntw:2008}
   are similar to MC methods except that
   their sample points are determined in the parameter space $\Om$, and
   an interpolant, for example, global Lagrange type polynomials as in 
  \cite{BNT:2007,BNT:2010}.
%
   In SC methods
   \cite{ntw:2008,xh:2005}, 
   they use the tensor product spaces for many random variables,
   which deteriorates the convergence rate and leads to the explosion of
   computation since the number of collocation points in a tensor grid
   grows exponentially. 
Smolyak 
\cite{smolyak:1963}
proposes sparse tensor product spaces  to reduce
the number of collocation points when the number of random variables
is moderately large.
The sparse tensor product grids are constructed by
either Clenshaw-Curtis 
\cite{cc:1960}
or Gaussian abscissas.
   Recently,
   Teckentrup et al.\
   \cite{TJWG:2014}
   introduce
    the 
Multilevel Stochastic Collocation (MLSC) approach for reducing the
cost of the SC method.
Inspired by multigrid solvers for linear equations,
the MLSC method uses a hierarchical sequence of spatial approximations combined with
stochastic discretization to minimize computational cost
under the conditions of finite dimensional noise and bounded random fields with
uniformly bounded and coercive coefficients,
see 
\cite{TJWG:2014}
for more details on the conditions.



In the MLMC application to SPDEs,
there are two main tasks  such that 
we take sample from the input random field, and
 form a spatial discretization of the PDE for a fixed sample parameter and solve it,
following the algorithm shown in 
\cite{CGST:2011}.
The former is the usual step for MC methods.
The latter causes an extra burden to implement the
MLMC algorithm, like storing elements and corresponding building blocks
for the stiffness matrix at the coarse grid.
In
this paper,
we propose a new 
MLMC estimator for 
PDE with random coefficients,
based on the original idea of the MLMC method
 in 
\cite{heinrich:2001},
which approximates the solution at the fixed sample in two
consecutive levels and takes the difference of them.
We solve the problem for a given sample at the fine grid, take
the projection of it to the corresponding coarse grid and regard
the projected solution as the approximation at the coarse grid.
Obviously, this procedure does not need to 
solve a problem
at the coarse grid which leads to the reduction of the 
computational cost.
 This replacement suggests us a name for this method, that is,
 the Projected Multilevel Monte Carlo (PMLMC) method 
 coming from the projection procedure of the method.
   In the PMLMC method,
   we project the solution at a fine
   grid into 
   the solution space at a coarse grid.
   The projection takes 
   less time than the solving a problems at the coarse grid.
   We provide the theorems on the order of convergence for the PMLMC method 
   and the optimal number of samples at corresponding levels.

In this paper, 
we consider the following model problem, for $d=1,2$,
\begin{equation}\label{eq:darcy}
-\Div \paren{k(x;\om) \Grad u(x)}=0,\quad x\in D=(0,1)^d,\quad \om \in \Om.
\end{equation}
In 1D,  we apply the Dirichlet boundary condition as the boundary condition
\begin{equation*}
 u(0)=1,\quad u(1)=0,
\end{equation*}
and introduce  additional Neumann boundary conditions in 2D, for $x=(x_1,x_2)$
\begin{equation*}
u(0,x_2)=1,\quad u(1,x_2)=0,\quad
\f{\p u}{\p n}(x_1,0)=\f{\p u}{\p n}(x_1,1)=0.
\end{equation*}
 Here,  an uncertain hydraulic coefficient of the Darcy flow
 is
based on a certain mean and covariance structure
inferred from the data
describing  the situation in the subsurface structure, 
see 
\cite{CGST:2011}
for details.
 There are several ways to represent the random variable $k$ in \eqref{eq:darcy} 
 by using
a Karhunen-Lo\`eve (KL)  \cite{gs:1991,loeve:1977,loeve:1978,stefanou:2009}
 a polynomial chaos \cite{hd:2014,xiu:2010,xk:2002:wiener,xk:2003}  
 and a wavelet expansion
 \cite{BSZ:2011}. 
 Since the coefficient in ground water flow
 can vary very largely, we can express them in a logarithmic scale 
 \cite{BNT:2007,BNT:2010,gd:1998,HK:1985}. 
 In this case, we can apply any expansion for
 the logarithm of the coefficient, instead of the coefficient itself,
 which leads to an exponential dependence of the coefficient on the
 random variable $\om$ and its second moment might be unbounded
 \cite{BNT:2007,BNT:2010}.
%
 For simplicity,
we expand
the logarithm of 
the random conductivity 
in 1D 
through the KL expansion
as
Cliffe et al.\ 
\cite{CGST:2011},
\begin{equation*}
\log k(x;\omega)= 
\E\paren{\log k(x;\cdot)}
+
\sum_{n=1}^{\infty}\sqrt{\theta_n}\xi_n(\om) \phi_n(x).
\end{equation*}
%
Here, $\Set{\xi_n}_{n=1}^\infty$ is a set of
zero mean random variables uncorrelated to each other.
The eigenvalues $\Set{\theta_n}_{n=1}^\infty$ and
normalized eigenvectors $\Set{\phi_n}_{n=1}^\infty$
are generated from the covariance operator defined by
\begin{equation}
\label{eq:cov}
C(x,y)=\sigma^2 \exp\paren{-\f{\norm{x-y}_p}{\lambda}},\quad
x,y\in 
D,
\end{equation}
where $\sigma^2$ is the variance, $\lambda$ is the
correlation length and $\norm{\cdot}_p$ is the usual $p$-norm. 
By the choice of $p=1$, $\sigma^2\ge 1$ and $\lambda\le\Diam D$,
 the coefficient $k$
 is homogeneous and from Kolmogorov's theorem
  in 
  \cite{prato:1992},
  it belongs to $C^{0,\eta}(D)$ almost surely with
  $\eta<1/2$.

     A theoretical analysis of elliptic PDEs with random coefficients 
   such as \eqref{eq:darcy}  is
  done in 
  \cite{BSZ:2011} 
  under the condition that coefficient
  fields in $W^{1,\infty}(D)$ are bounded uniformly from above and
  away from zero. Charrier et al.\ 
  \cite{CST:2013} 
  analyze when
  the coefficient is not uniformly bounded and only in $C^{0,\eta}(D)$
  with $\eta<1/2$.
  We follow the covariance relation in
  \cite{CGST:2011}
   and
  show the results on the estimation of $\E\brack u$ of the solutions 
  of \eqref{eq:darcy}
  with respect to the realizations of the randomness.
  We use the same condition in Cliffe et al.\ 
  \cite{CGST:2011}
   to prove the order of convergence
  and the optimal number of samples  for the PMLMC method.
   
This paper has the following structure.
In \secref{sec:pre}, we provide all the preliminaries on 
 probability and Bochner integrals.
We analyze the order of convergence and optimal number of samples for
 MC, MLMC, and PMLMC methods in \secref{sec:order}.
The analysis on the variance of the projection is also provided in \secref{sec:order}
with a corollary
for a hierarchical grid structure.
In \secref{sec:NS},
numerical results of \eqref{eq:darcy}  are 
illustrated
 on the order of convergence and cost savings. 

\section{Preliminaries}
\label{sec:pre}

Let $\paren{\Om,\H,\P}$ be a {\em probability space\/} where
$\Om$ is the { sample space\/} and its elements $\om\in\Om$ are  
{ outcomes\/}, 
$\H\subset 2^\Om$ is the 
{ grand history\/} of $\sigma$-algebra and its elements $H\in\H$ are
 { events\/}, and $\P:\H\to[0,1]$ is the 
 { probability measure\/}.
 For a measurable space $\paren{E,\cE}$ with the $\sigma$-algebra $\cE\subset 2^E$,
 a mapping $X:\Om\to E$ satisfying $X^{-1} A\in\H$ for any $A\in\cE$,
  is an $E$-valued random variable.
The image $\mu_X$ of $\P$ under an $E$-valued 
random variable $X$,
\begin{equation*}
\mu_X\paren A = \P\brack{X^{-1}\paren A} = \P\brack{X\in A},\quad {}^\forall A\in\cE,
\end{equation*}
is a probability measure on 
$\paren{E,\cE}$,
called the {\em distribution\/} of $X$.
If $\paren{F,\cF}$ is  measurable,
$E$-valued and $F$-valued
random variables $X$ and $Y$ are {\em independent\/} if
their { joint distribution\/} $\mu_{X,Y}$ is the product measure
$\mu_X\times\mu_Y$
on $\paren{E\times F,\cE\otimes\cF}$,
 where $\mu_X$ 
and 
$\mu_Y$ 
are the marginal distributions of
$X$ and $Y$ respectively.
A {\em simple\/} $E$-valued  random variable $X$ 
attains
only a finite number of distinct values
$\Set{x_n}_{n=1}^N\subset E$, and has the form
$X=\sum_{n=1}^N x_n\,1_{A_n}$ where the $A_n\in\H$ are disjoint and
$1_{A_n}$ is the indicator function of $A_n$, 
see 
\cite{cinlar:2011}
 for more details. 

Let $V$ be a separable Banach space with the norm $\norm{\cdot}_V$,
its topological dual $V'$ and
the Borel $\sigma$-algebra $\B\paren V$ for the measurable space
$\paren{V,\B\paren V}$.
If there exists a sequence $\Set{X_n}$
of simple $V$-valued  random variables such that $X_n\toas X$,  that is,
\begin{equation*}
\P\brack{\lim_{n\to\infty}\norm{X_n-X}_V=0}=1,
\end{equation*}
then  $X$ is said to be {\em strongly\/} measurable.
From 
\cite{prato:1992}, 
a $V$-valued  random variable has a sequence $\Set{X_n}$
of simple $V$-valued  random variables such that,
for arbitrary $\om\in\Om$, the sequence
$\Set{\norm{X\paren\om-X_n\paren\om}_V}$ is monotonically
decreasing to $0$. This means that
$\Set{\lim_{n\to\infty}\norm{X-X_n}_V=0}=\Om$ 
or
$\P\brack{\lim_{n\to\infty}\norm{X_n-X}_V=0}=1$,
i.e., $X_n\toas X$.
Thus
any
$V$-valued  random variable is strongly measurable if $V$ is separable.

For a simple $V$-valued  random variable $X=\sum_{n=1}^N x_n\,1_{A_n}$,
if $\P\brack{A_n}$ is finite whenever $x_n\ne0$, then
$X$ is integrable and its integral, called the 
{\em Bochner integral\/} $\E\brack{X}$ of $X$,
is
\begin{equation*}
\E\brack{X}
=
\int_\Om X\,d\P
=
\int_\Om X\paren\om\,\P\paren{d\om}
=
\sum_{n=1}^N x_n\,\P\brack{A_n}.
\end{equation*}

As in  
\cite{prato:1992},
 the real valued random variable $\norm{X\paren\cdot}_V$ is measurable for any
$V$-valued random variable $X$. Then
a $V$-valued random variable $X$ is Bochner integrable if
\begin{equation*}
\E\brack{\norm{X}_V}
=
\int_\Om \norm{X}_V\,d\P
=
\int_\Om\norm{X\paren\om}_V\,\P\paren{d\om} < \infty.
\end{equation*}
Since a $V$-valued random variable $X$ is strongly measurable, 
 there is
a sequence $\Set{X_n}$
of simple $V$-valued  random variables  such that
$X_n\toas X$, then 
\begin{equation*}
Y_n\paren\om
=
\left\{
\begin{array}{ll}
X_n\paren\om,\quad
&\mbox{if }\norm{X_n\paren\om}_V\le \f32 \norm{X\paren\om}_V,
\\
\\
0,&\mbox{otherwise},
\end{array}
\right.
\end{equation*}
forms a sequence $\Set{Y_n}$ of simple $V$-valued random variables 
such that $\norm{Y_n}_V\le \f32 \norm{X}_V$ and $Y_n\toas X$.
Since $\norm{X}_V$ is integrable,
by Dominated Convergence Theorem, 
\begin{equation*}
\lim_{n\to\infty} \E\brack{\norm{X-Y_n}_V}
=
\E\brack{\lim_{n\to\infty}\norm{X-Y_n}_V }
=0,
\end{equation*}
holds, and $\Set{\E\brack{Y_n}}$ is a Cauchy sequence in $V$.
Then the Bochner integral 
of $X$ is
\begin{equation*}
\E\brack X
=
\int_\Om X\,d\P
=
\lim_{n\to\infty}
\int_\Om Y_n\,d\P
=
\lim_{n\to\infty}\E\brack {Y_n},
\end{equation*}
and the limit is the same
for any sequence $\Set{X_n}$
of simple $V$-valued  random variables  satisfying 
$\lim_{n\to\infty} \E \brack{\norm{X - X_n}_V} =0$.

Let $L^p\paren{\Om;V}$ be the {\em Bochner space\/} of Bochner integrable, 
$V$-valued random variables $X$
such that the corresponding norm
\begin{equation*}
\norm{X}_{L^p\paren{\Om;V}}
=
\left\{
\begin{array}{ll}
\paren{\,\E\brack{\norm{X}^p_V}\,}^{\f1p},
&
\quad
1\le p <\infty,
\\
\\
\esssup_{\om\in\Om} \norm{X\paren\om}_V,
&\quad  p=\infty,
\end{array}
\right.
\end{equation*}
is finite, where $X$ is the equivalence class  with respect to the equivalence relation
$X\sim Y$ if and only if $X=Y$ almost surely.

If $V$ is a separable Hilbert space with the inner product 
$\inner\cdot\cdot$,
then the Bochner integral 
of the inner product 
of two independent 
$V$-valued
random
variables $X$ and $Y$ equals 
the inner product of their Bochner integrals,
\begin{eqnarray}\label{eq:Bochner:inner}
\E\brack{\inner XY}
&=&
\int_{V\times V}  
\inner{x}{y}\,\mu_{X,Y}\paren{dx,dy}
=
\int_{V\times V}  
\inner{x}{y}\,\mu_{X}\paren{dx}\,\mu_{Y}\paren{dy}
\nonumber
\\
&=&
\int_{\Om\times\Om} 
\inner{X\paren{\om_1}}{Y\paren{\om_2}}\,\P\paren{d\om_1}\,\P\paren{d\om_2}
\nonumber
\\
&=&
\int_{\Om}
\brack{
 \int_{\Om}
\inner{X\paren{\om_1}}{Y\paren{\om_2}}\,\P\paren{d\om_2}}
\P\paren{d\om_1}
\nonumber
\\
&=&
\int_{\Om}
\brack{
\inner{X\paren{\om_1}}{\int_{\Om} Y\paren{\om_2}\,\P\paren{d\om_2}}}
\P\paren{d\om_1}
\nonumber
\\
&=&
\inner{\int_{\Om} X\paren{\om_1}\, \P\paren{d\om_1}}
        {\int_{\Om} Y\paren{\om_2}\,\P\paren{d\om_2}}
\nonumber
\\
&=&
\inner{\E\brack X}{\E\brack Y}.
\end{eqnarray}
Here the classical Fubini theorem and 
Proposition 1.6 in Chapter 1 of
\cite{prato:1992}
 are used
as well as the property of the independence. 
Finally, the {\em variance\/} of a $V$-valued random variable is
\begin{eqnarray*}
\Var\brack{X}
&=&
\E\brack{\norm{X-\E\brack X}_V^2}
=
\E\brack{\norm{X}_V^2 - 2\real\inner{X}{\E\brack X}+\norm{\E\brack X}_V^2}
\nonumber
\\
&=&
\E\brack{\norm{X}_V^2}-2\norm{\E\brack X}_V^2+\E\brack{\norm{\E\brack X}_V^2}
\nonumber
\\
&=&
\E\brack{\norm{X}_V^2}-\norm{\E\brack X}_V^2,
\end{eqnarray*}
which coincides with the usual definition of the variance.


\section{Order of Convergence and Complexity}
\label{sec:order}

When $\lim_{N\to\infty}\E\brack{X_N}=\E\brack X$ 
for some random variable $X$ and its 
approximation $X_N$, 
the 
{\em order of convergence\/}
 is $p$ if
$\norm{\E\brack{X_N-X}}\le C N^{-p}$, where
$C$ is independent of $N$ and $\norm{\cdot}$ is the 
proper norm under the given context of convergence,
see 
\cite{CGST:2011}.

 The computational cost $\C(\hX_N)$ is
 the number of floating point operations to compute
 $\hX_N$ which is the realization of $X_N$.
Implicitly, 
$\hX_N$ should satisfy some criteria of 
convergence, for example, the error between
$\hX_N$ and $X_N$
is less than or equal to 
the given number $\eps$. This means that the trivial
choice of $\hX_N$ should be excluded.

Let
$V=H^1_0(D)$ be a separable Hilbert space 
with the inner product $\inner{\cdot}{\cdot}$ and its associated
norm $\norm{\cdot}_V$ such that
\begin{equation*}
\inner u v = \int_D\paren{\Grad u\cdot \Grad v + u v}\,dx,\quad
\norm{u}_V=\sqrt{\inner uu}.
\end{equation*}

\subsection{Monte Carlo Method}
\label{subsec:MC}

Let $\Set{u_k}_{k=1}^N \subset V$ be solution samples corresponding
to $N$ independent, identically distributed realizations of 
random input data,
and $\EMC{N}(u)$  the mean
of them by the 
{\em Monte Carlo (MC)\/} method defined as
\begin{equation*}
\EMC{N}(u)= \f1N\sum_{k=1}^N u_k \in V.
\end{equation*}
The MC estimator satisfies the {\em unbiased\/} property as follows
\begin{equation*}
\E\brack{\EMC{N}(u)}
=\f1N\sum_{k=1}^N \E\brack{u_k}
=\f1N\sum_{k=1}^N \E\brack{u}
=\E\brack u,
\end{equation*}
since $\Set{u_k}_{k=1}^N $ are  independently chosen
following the identical distribution.
And the variance of the MC estimator is
\begin{equation*}
\Var\brack{\EMC{N}(u)}
=\E\brack{\norm{\EMC{N}(u) - \E\brack{\EMC{N}(u)} }_V^2}
=\E\brack{\norm{\EMC{N}(u) - \E\brack u }_V^2}.
\end{equation*}
The error $\D^\MC_N(u)$ between 
$\EMC{N}(u)$ and $\E \brack u\in V$,
\begin{equation*}
\D^\MC_N(u)=
 \EMC{N}(u) - \E\brack u
=
 \f1N\sum_{k=1}^N u_k  - \E\brack u
=\f1N\sum_{k=1}^N \paren{u_k - \E\brack u}
\end{equation*}
has independent terms $u_k - \E\brack u$.  
Using \eqref{eq:Bochner:inner}, we know that
the Bochner integral
of the inner product between mutually independent terms
$u_k - \E\brack u$ and $u_j - \E\brack u$  for $k\ne j$, becomes
\begin{equation*}
\E \brack{\inner {u_k - \E\brack u}{u_j - \E\brack u}}
=\inner {\E\brack{u_k - \E\brack u}}{\E\brack{u_j - \E\brack u}}
=0, 
\end{equation*}
since $\E\brack{u_k - \E\brack u}=\E \brack {u_k} - \E\brack u =0$ 
from the 
unbiased property of  
the MC method. Then
the mean square error is
\begin{eqnarray}
\label{eq:MC}
\norm{\D^\MC_N (u)}^2
&=&
\E\brack{\norm{\EMC{N}(u) - \E\brack u}_V^2}
=\Var\brack{\EMC{N} (u) }
\nonumber
\\
&=&
{
\f1{N^2}
\E\brack{\norm{\sum_{k=1}^N 
\paren{ u_k - \E\brack u }}^2_V } }
=
{
\f1{N^2}
\sum_{k=1}^N  
\E\brack{\norm{u_k - \E\brack u }^2_V }
}
\nonumber
\\
&=&
{\f1{{N}}{\E\brack{\norm{u - \E\brack u }^2_V} } }
=
{\f1{{N}}{\V\brack u } }
\le
{\f1{N}{\E\brack{\norm{u}^2_V}}}
=
\f1{{N}} {\norm{u}}^2,
\end{eqnarray}
where $\norm{\cdot}=\norm{\cdot}_{L^2\paren{\Om;V}}$ is the norm of $L^2\paren{\Om;V}$.
Thus,
the relative error 
of the MC method is
less than or equal to 
$1/\sqrt{N}$,
that is, the order of convergence  of  the MC method is $1/2$.
Precisely,
let $\eps$ be the desired error bound for the MC estimator,
 i.e., 
$\|{\D_N^\MC\paren u}\| \le \norm{u}/{\sqrt{N}} = \eps$,
 then 
$N={{\norm{u}}^2\eps^{-2}}$ is the best choice
to attain the desired error.
That is, we must increase
the number of samples  fourfold to
decrease the error by half.

\subsection{Single Level Monte Carlo Method}
\label{subsec:SLMC}

Let $\T_l$ be the 
{\em triangulation\/} of $D$ into simplices with a mesh size $h_l$
for 
$l\in\N$,
 and
nodes in $\T_{l-1}$ belong to those in $\T_l$ which ensures the 
{\em hierarchical\/}
structure of triangulation.
Let $V_l$ be the space of piecewise linear functions on $D$,
 i.e.,
\begin{equation*}
V_l = \Set{v\in V\,:\, v\big|_K\in \cP_1(K),\, {}^\forall K\in\T_l},
\end{equation*}
where $\cP_1(K)$ is a linear polynomial space on a triangle $K\in\T_l$.
Let $\{u_{l,k}\}_{k=1}^{N_l}\subset V_l$ be Galerkin
Finite Element approximations in $V_l$ 
corresponding to the realizations of the random coefficient.
Then 
the 
{\em Single Level
Monte Carlo\/} Finite Element approximation $\EMC{l,N_l}(u)$ in $V_l$
is defined by
\begin{equation*}
\EMC{l,N_l}(u)
=
\EMC{N_l}(u_l)
=\f1{N_l}\sum_{k=1}^{N_l}u_{l,k}
\in L^2\paren{\Om;V_l}.
\end{equation*}
From the equality relation 
of \eqref{eq:MC}, 
the variance of $\EMC{l,N_l}(u)$ is
\begin{eqnarray*}
\Var\brack{\EMC{l,N_l}(u)}
=
\Var\brack{\EMC{N_l}(u_l)}
=
\f1{N_l}\Var\brack{u_l}.
\end{eqnarray*}
Let $\D^\MC_{l,N_l}(u)$ be the error between $\EMC{l,N_l}(u)$ and $\E\brack u$,
\begin{equation*}
\D^\MC_{l,N_l}(u)=\EMC{l,N_l}(u)-\E\brack u
=\EMC{l,N_l}(u)-\E\brack{\EMC{l,N_l}(u)}+\E\brack{u_l}-\E\brack u.
\end{equation*}
Then we expand the mean square error $\norm{\D^\MC_{l,N_l}(u)}^2$ as follows
\begin{eqnarray*}
\norm{\D^\MC_{l,N_l}(u)}^2
&=&
{
\E\brack{\norm{\EMC{l,N_l}(u) - \E\brack{ \EMC{l,N_l}(u)}
 +\E\brack{ u_l} - \E\brack u }^2_V }
}
\nonumber
\\
&=&
{
\E\brack{\norm{\EMC{l,N_l}(u) - \E\brack{ \EMC{l,N_l}(u)} }^2_V}
+
\E\brack{\norm{\E\brack{ u_l}  -\E\brack u}^2_V}
}
\nonumber
\\
&+&
2\real\bigparen
{
\E\brack{\inner{ \EMC{l,N_l}(u) - \E\brack{ \EMC{l,N_l}(u)} }{\E\brack{ u_l}  -\E\brack u}}
}
\nonumber
\\
&=&
\Var\brack{\EMC{l,N_l}(u)}
+
{\norm{\E\brack{ u_l}- \E\brack{u}}^2}
=
\f1{N_l}\Var\brack{u_l}
+
{\norm{\E\brack{ u_l- u}}^2}.
\end{eqnarray*}
Here, the Bochner integrals of inner products between different deviations are zero
due to the 
unbiased property
of the MC method
and the relation \eqref{eq:Bochner:inner} for
two independent random variables. 
The computational cost 
of the estimator by the Single Level Monte Carlo method is
\begin{equation*}
\C\paren{\EMC{l,N_l}(u)} = N_l\C_l,
\end{equation*}
where $\C_l$ is the mean
computational complexity at level $l$ by the finite element method.
The main difference to the MC method is the use of the finite space to
approximate a solution for a realization of the randomness, 
which 
results
the approximation error.

\subsection{Multilevel Monte Carlo Method}
\label{subsec:MLMC}

Set $u_0=0$, and $w_l =u_l-u_{l-1}$ for $l=1,\ldots,L$, where
$L$ is the maximum level. Clearly,
$u_L=\sum_{l=1}^L\paren{u_l-u_{l-1}}$ and further we have
\begin{equation*}
\E\brack{ u_L }= \sum_{l=1}^L\E\brack{u_l-u_{l-1}}
=\sum_{l=1}^L\E\brack{ w_l}
=\E\brack{u_1}+\sum_{l=2}^L\E\brack{ w_l}.
\end{equation*}
From the above observation,
the 
{\em Multilevel Monte Carlo (MLMC)\/}
 Finite Element approximation 
$\EML{L}(u)$
is defined by
\begin{equation*}
\EML{L}(u)
=
\sum_{l=1}^L \EMC{N_l}\paren{w_l}
=
\EMC{N_1}\paren{u_1}+
\sum_{l=2}^L \EMC{N_l}\paren{w_l}.
\end{equation*}
Since $\E\brack{ \EMC{N_l}(w_l)}=\E\brack{ w_l}$ 
by the 
unbiased property 
of the MC method,
we have
\begin{equation*}
\E\brack{ \EML{L}(u)}
=\sum_{l=1}^L \E\brack{\EMC{N_l}\paren{w_l}} =\sum_{l=1}^L \E \brack{w_l}
=\E\brack{ u_L}.
\end{equation*}
The variance of the MLMC estimator is
\begin{eqnarray*}
\Var\brack{\EML{L}(u)}
&=&
\E\brack{\norm{\EML{L}(u)-\E\brack{\EML{L}(u)} }_V^2 }
\nonumber
\\
&=&
\E\brack{\norm{\sum_{l=1}^L \paren{\EMC{N_l}\paren{w_l}-\E \brack{w_l}} }_V^2 }
\nonumber
\\
&=&
\sum_{l=1}^L  \E\brack{\norm{\EMC{N_l}\paren{w_l}-\E \brack{w_l} }_V^2 }
\nonumber
\\
&+&
2\real\paren{\sum_{l\ne k}
\inner{\EMC{N_l}\paren{w_l}-\E \brack{w_l} }{\EMC{N_k}\paren{w_k}-\E \brack{w_k} } }
\nonumber
\\
&=&
\sum_{l=1}^L  \Var\brack{\EMC{N_l}\paren{w_l}}
=
\sum_{l=1}^L  \f1{N_l}\Var\brack{w_l}
=
\sum_{l=1}^L  \f1{N_l}\Var\brack{u_l-u_{l-1}},
\end{eqnarray*}
since the deviations $\EMC{N_l}\paren{w_l}-\E \brack{w_l}$ 
are mutually independent due to different
samples.
Let $\D^\ML_L(u)$ be the error between 
$\EML{L}(u)$ and $\E\brack u$
\begin{equation*}
\D^\ML_L(u)=\EML{L}(u)-\E\brack u
=\EML{L}(u)-\E\brack{\EML{L}(u) }+\E\brack{u_L}-\E\brack u.
\end{equation*}
The mean square error becomes
\begin{eqnarray*}
\norm{\D^\ML_L(u)}^2
&=&
\E\brack{\norm{\EML{L}(u)  - \E\brack{\EML{L}(u) } + \E\brack{  u_L} - \E\brack u }_V^2}
\nonumber
\\
&=&
\E\brack{\norm{\EML{L}(u)  - \E\brack{\EML{L}(u) }  }_V^2 }
+
\E\brack{\norm{\E\brack{  u_L} - \E\brack u  }_V^2 }
\nonumber
\\
&+&
2\real \E\brack{\inner{\EML{L}(u)  - \E\brack{\EML{L}(u) }}{\E\brack{  u_L} - \E\brack u } }
\nonumber
\\
&=&
\Var\brack{\EML{L}(u)} + \norm{\E\brack{  u_L} - \E\brack u  }^2
\nonumber
\\
&=&
\sum_{l=1}^L  \f1{N_l}\Var\brack{u_l-u_{l-1}}
+
\norm{\E\brack{  u_L- u}  }^2.
\end{eqnarray*}
The computational cost $\C({\EML{L}(u)})$
of the MLMC estimator  is
\begin{equation*}
\C\paren{\EML{L}(u)} = \sum_{l=1}^L N_l \paren{\C_l+\C_{l-1}}
=N_L \C_L + \sum_{l=1}^{L-1} \paren{N_l+N_{l+1}} \C_l ,
\quad\C_0=0,
\end{equation*}
 where $\C_l$ contains the mean complexity at level $l$
 including differencing cost. 
Cliffe et al.\
 \cite{CGST:2011}
 discuss the  cost saving
 in the decay tendency of the variances,
 compared to the MC method.
 
\subsection{Projected Multilevel Monte Carlo Method}
\label{subsec:PMLMC}

Let $P_l$ be the 
{\em projection\/} from $V_l$ into $V_{l-1}$, i.e.,
\begin{equation}
\label{eq:proj}
\inner{P_l u_l}{v}=\inner{u_l}{v},\quad {}^\forall v\in V_{l-1},
\end{equation}
 and $P_1=0$. 
Set $w_l=\paren{I-P_l}u_l$, then
the 
{\em Projected Multilevel Monte Carlo (PMLMC)\/} 
Finite Element approximation $\EPML{L}(u)$ is 
\begin{equation*}
\EPML{L}(u) = \sum_{l=1}^L {\EMC{N_l}(\paren{I-P_l} u_l) }
=
\sum_{l=1}^L {\EMC{N_l}(w_l) }
=
\EMC{N_1}(u_1) 
+
\sum_{l=2}^L {\EMC{N_l}(w_l) }.
\end{equation*}
Note that we replace $\EMC{N_{l}}(u_{l-1})$ in the MLMC method
by the projection $P_l  \EMC{N_l}(u_l)$ of $\EMC{N_l}(u_l)$ 
in order to
simplify the computational complexity. 
Using the unbiased 
property of the MC method, we have
\begin{equation*}
\E\brack{\EPML{L}(u)}
=
\sum_{l=1}^L {\E\brack{\EMC{N_l}(w_l) } }
=
\sum_{l=1}^L  \E\brack{w_l }
=
\E\brack{u_1 }
+
\sum_{l=2}^L  \E\brack{w_l }.
\end{equation*}
Since $\E[{\EMC{N_l}(w_l)}]=\E\brack{w_l}$,
the variance of the PMLMC estimator is
\begin{eqnarray*}
\Var\brack{\EPML{L}(u)}
&=&
\E\brack{\norm{\EPML{L}(u)-\E\brack{\EPML{L}(u)} }_V^2 }
\nonumber
\\
&=&
\E\brack{\norm{\sum_{l=1}^L  \paren{\EMC{N_l}(w_l) - \E\brack{w_l } }  }_V^2 }
\nonumber
\\
&=&
\sum_{l=1}^L 
\E\brack{
\norm{ \paren{\EMC{N_l}(w_l) - \E\brack{w_l } } }_V^2
}
\nonumber
\\
&+&
2\real
\paren{
\sum_{l\ne k}
\E\brack{
\inner{\EMC{N_l}(w_l) - \E\brack{w_l } }
{\EMC{N_k}(w_k) - \E\brack{w_k } }
}
}
\nonumber
\\
&=&
\sum_{l=1}^L 
\Var\brack{\EMC{N_l}(w_l)}
=
\sum_{l=1}^L 
\f1{N_l}
\Var\brack{w_l}
=
\sum_{l=1}^L 
\f1{N_l}
\Var\brack{u_l-P_l u_l},
\end{eqnarray*}
since 
we use different samples at each level, 
in other words, the deviations are independent to each other.
The error of the PMLMC estimator is defined by
\begin{equation*}
\D^\PML_L(u) = \EPML{L}(u) - \E\brack u.
\end{equation*}
Then the mean square error is
\begin{eqnarray*}
\norm{\D^\PML_L(u)}^2
&=&
\E\brack{\norm{\EPML{L}(u)  - \E\brack{\EPML{L}(u) } + \E\brack{\EPML{L}(u) } - \E\brack u }_V^2}
\nonumber
\\
&=&
\E\brack{\norm{\EPML{L}(u)  - \E\brack{\EPML{L}(u) } }_V^2}
+ 
\E\brack{\norm{\E\brack{\EPML{L}(u) } - \E\brack u }_V^2}
\nonumber
\\
&+&
2\real
\bigparen{
\E\brack{
\inner{\EPML{L}(u)  - \E\brack{\EPML{L}(u) } }{\E\brack{\EPML{L}(u) } - \E\brack u }
}
}
\nonumber
\\
&=&
\Var\brack{\EPML{L}(u)}
+ 
{\norm{\E\brack{\EPML{L}(u) } - \E\brack u }^2}
\nonumber
\\
&=&
\sum_{l=1}^L \f1{N_l}\Var\brack{u_l-P_l u_l}
+ 
{\norm{\E\brack{\EPML{L}(u) - u} }^2}.
\end{eqnarray*}
Using the regular triangulation condition
briefly stated in \secref{subsec:var},
see 
\cite{BA:1976,ciarlet:1978,jamet:1976,zlamal:1968}
for various conditions,
we can bound the first variance term
as follows.
\begin{lem}\label{lem:var:proj}
Let $\T_{l}$ be a triangulation of $D$ to form an approximate space $V_{l}$,
and $\{\vertex_i\}_{i=1}^3$ the set of vertices of a triangle $K\in\T_{l-1}$.
For a fine grid solution $u_l\in V_l$,
the bound of the variance of $u_{l}-P_l u_l$ is
\begin{equation*}
\Var\brack{u_{l}-P_l u_l} \le 
\regconst
\paren{ \diam_{l-1}^2 +1 } \sum_{K\in\T_{l-1}}\sum_{i=1}^3
\Var\brack{\paren{u_l-P_l u_l}(\midpoint_i(K);\cdot)},
\end{equation*}
where $C$ is a constant related to the regular triangulation,
$\diam_{l-1}$ is the maximum of diameters of triangles,
$\Set{\midpoint_i(K)}_{i=1}^3$ are  mid points on edges of 
the triangle,
and variances $\Var\brack{\paren{u_l-P_l u_l}(\midpoint_i(K);\cdot)}$ 
are  usual variances for discrete values..
\end{lem}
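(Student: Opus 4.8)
The plan is to convert the statement about $V$-norm variances into a deterministic, pointwise-in-$\om$ finite element inequality and then to recover it by summing local scaling estimates over the coarse triangulation. Set $w_l=(I-P_l)u_l$ and let $g=w_l-\E\brack{w_l}$. By the definition of the variance, $\Var\brack{w_l}=\E\brack{\norm{g}_V^2}$, while $\Var\brack{(u_l-P_l u_l)(\midpoint_i(K);\cdot)}=\E\brack{\abs{g(\midpoint_i(K))}^2}$, so by linearity of the Bochner integral it suffices to prove the deterministic bound
\begin{equation*}
\norm{g}_V^2\le \regconst\paren{\diam_{l-1}^2+1}\sum_{K\in\T_{l-1}}\sum_{i=1}^3\abs{g(\midpoint_i(K))}^2
\end{equation*}
for almost every $\om$ and then take expectations. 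Two structural facts about $g$ will be used. First, $V_l$ is finite dimensional, so $\E\brack{w_l}\in V_l$ and hence $g\in V_l$. Second, \eqref{eq:proj} gives $\inner{(I-P_l)u_l}{v}=0$ for every $v\in V_{l-1}$; since point evaluation and $\inner{\cdot}{v}$ are continuous linear functionals on the finite-dimensional $V_l$, they commute with the Bochner integral, so the centered field inherits $\inner{g}{v}=0$ for all $v\in V_{l-1}$ and the midpoint variances are exactly $\E\brack{\abs{g(\midpoint_i(K))}^2}$.

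Next I would localize to a single coarse triangle $K\in\T_{l-1}$, which its three edge midpoints split into four fine triangles. On $K$ the restriction $g|_K$ lies in the fixed finite-dimensional space of maps that are affine on each of the four pieces, a space pinned down by the six nodal values of $g$ at the vertices and edge midpoints of $K$. Pulling $K$ back to a reference macro-triangle and combining equivalence of norms on this finite-dimensional space with the affine scaling of the $H^1$-seminorm and of the $L^2$-norm gives
\begin{equation*}
\int_K\paren{\abs{\Grad g}^2+\abs g^2}\,dx\le \regconst\paren{\diam_{l-1}^2+1}\sum_{p}\abs{g(p)}^2,
\end{equation*}
where the sum runs over the six nodes $p$ of $K$ and $\regconst$ depends only on the shape-regularity constant; here the factor $1$ comes from the seminorm and $\diam_{l-1}^2$ from the $L^2$ term, which is precisely the $d=2$ scaling responsible for $\paren{\diam_{l-1}^2+1}$. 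Summing over $K$ and using that each node lies in a uniformly bounded number of coarse triangles bounds $\norm{g}_V^2$ by $\regconst\paren{\diam_{l-1}^2+1}$ times the squared values of $g$ at \emph{all} fine nodes, i.e. at the coarse vertices as well as the midpoints.

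The main obstacle is to discard the coarse-vertex contributions, since the stated right-hand side keeps only midpoint values; this is where the orthogonality $\inner{g}{v}=0$, $v\in V_{l-1}$, is essential. Split $g=g_c+g_f$ with $g_c\in V_{l-1}$ the coarse nodal interpolant of $g$ and $g_f=g-g_c$, which vanishes at every coarse vertex. Testing orthogonality against $g_c$ gives $\norm{g_c}_V^2=-\inner{g_f}{g_c}\le\norm{g_f}_V\norm{g_c}_V$, hence $\norm{g_c}_V\le\norm{g_f}_V$ and $\norm{g}_V^2\le 4\norm{g_f}_V^2$. Because $g_f$ vanishes at the coarse vertices, the local scaling estimate applied to $g_f$ involves only its midpoint values and yields $\norm{g_f}_V^2\le \regconst\paren{\diam_{l-1}^2+1}\sum_{K,i}\abs{g_f(\midpoint_i(K))}^2$, which closes the argument cleanly. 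The delicate point I expect to wrestle with is that $g_f(\midpoint_i(K))=g(\midpoint_i(K))-\tfrac12\paren{g(\vertex)+g(\vertex')}$ is the hierarchical increment along the edge rather than the bare value $g(\midpoint_i(K))$ written in the statement; thus the estimate I obtain most naturally controls $\Var\brack{w_l}$ by the variances of the midpoint increments $w_l(\midpoint_i)-\tfrac12\paren{w_l(\vertex)+w_l(\vertex')}$. Reconciling this with the literal right-hand side — either by reading the stated midpoint variances as those of the increments, or by absorbing the vertex terms through the mass-matrix lower bound $\norm{g_c}_V^2\gtrsim\diam_{l-1}^2\sum_{\vertex}\abs{g(\vertex)}^2$ with an $l$-uniform constant, where the naive triangle-inequality bound reintroduces the vertices with a divergent $\diam_{l-1}^{-2}$ weight and does not close — is the technical heart of the lemma, and I would attack it through a stable hierarchical (strengthened Cauchy–Schwarz) splitting rather than the crude estimate.
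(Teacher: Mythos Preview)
Your reduction to a deterministic inequality and the local scaling argument are sound, but your route diverges from the paper's and runs into precisely the obstacle you flag at the end. The paper never invokes the $H^1$-orthogonality of $P_l$. In the detailed computation of \secref{subsec:var} it identifies $P_l$ with \emph{nodal interpolation} on the coarse mesh, writing $Pu\big|_K(x;\om)=\sum_{i=1}^3\weight_i(x)\,u(\vertex_i;\om)$ as in \eqref{eq:proj:K}. With this reading one has $u_l(\vertex_i;\om)=P_lu_l(\vertex_i;\om)$ at every coarse vertex, so the centered field $v=(I-P_l)u_l-\E\brack{(I-P_l)u_l}$ (your $g$) vanishes at all coarse vertices \emph{identically}. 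Only the midpoint deviations $\eta_i=v(\midpoint_i(K))$ survive, and the reconciliation you were searching for is trivial: no hierarchical splitting, no strengthened Cauchy--Schwarz, the vertex contributions are zero from the outset.

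From there the paper's argument is a direct hand computation. On each coarse triangle $K$, split into its four congruent children $K_1,\dots,K_4$, one writes $v\big|_{K_j}$ explicitly as an affine combination of the three $\eta_i$ with barycentric coefficients, integrates $v^2$ and $\abs{\Grad v}^2$ exactly, and obtains $\int_K v^2\,dx\le\tfrac{5}{24}\abs{K}\sum_i\eta_i^2\le \regconst\,\diam^2\sum_i\eta_i^2$ and $\int_K\abs{\Grad v}^2\,dx\le\abs{K}^{-1}\bigl(\sum_i\e_i^2\bigr)\sum_i\eta_i^2\le \regconst\sum_i\eta_i^2$, the second bound being where the shape-regularity constant enters. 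Summing over $K\in\T_{l-1}$ and taking expectations finishes the lemma. Your abstract scaling-plus-orthogonality strategy is the natural one for the genuine $H^1$-orthogonal projector of \eqref{eq:proj}, but as you correctly diagnose it delivers midpoint \emph{increments} rather than midpoint \emph{values}; the paper sidesteps this mismatch by, in effect, working with the interpolant in the proof.
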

\begin{proof}
We can expand
the variance of $u_{l}-P_l u_l$ as follows 
\begin{eqnarray*}
\Var\brack{u_{l}-P_l u_l}
&=&
\E\brack{\norm{u_{l}-P_l u_l-\E\brack{u_{l}-P_l u_l}}_V^2 }
\\
&=&
\E\brack{\norm{v}_{L^2(D)}^2 }
+
\E\brack{\norm{\Grad\,v}_{L^2(D)^2}^2 },
\end{eqnarray*}
by introducing an auxiliary deviation $v=u_{l}-P_l u_l-\E\brack{u_{l}-P_l u_l}$.
We obtain the bound
in \secref{subsec:var} using the regular triangulation condition.
\end{proof}
Now, we want to bound the error 
with respect to the approximation error.
\begin{thm}
\label{thm:PML}
The error $\D^\PML_L(u)$ 
is bounded by
\begin{equation} \label{eq:PML}
\norm{\D^\PML_L(u)}
\le
c_0\norm{u}+\sum_{l=1}^{L} c_l \norm{u-u_l},
\end{equation}
where the constants $c_l$ are dependent on numbers of samples as 
\begin{equation*}
\left\{
\begin{array}{l}
c_0={{N_1}^{-\f12}+2\sum_{l=2}^L N_l^{-\f12} },
\\
c_1= 1+N_1^{-\f12}+2 N_2^{-\f12},
\\
c_l=2+ 2{{N_l}^{-\f12}}+2{{N_{l+1}}^{-\f12}}, \mbox{ for } l=2,\ldots,L-1,
\\
c_L=2+2{{N_L}^{-\f12}}.
\end{array}
\right.
\end{equation*}
\end{thm}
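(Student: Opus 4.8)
The plan is to start from the mean square error identity established just above the statement,
\[
\norm{\D^\PML_L(u)}^2 = \sum_{l=1}^L \f1{N_l}\Var\brack{u_l-P_l u_l} + \norm{\E\brack{\EPML{L}(u) - u}}^2,
\]
and to pass to $\norm{\D^\PML_L(u)}$ using subadditivity of the square root, $\sqrt{\sum_i x_i}\le\sum_i\sqrt{x_i}$ for $x_i\ge0$. This cleanly splits the statistical (variance) contribution from the deterministic (bias) contribution,
\[
\norm{\D^\PML_L(u)} \le \sum_{l=1}^L \f1{\sqrt{N_l}}\sqrt{\Var\brack{w_l}} + \norm{\E\brack{\EPML{L}(u) - u}}, \qquad w_l=(I-P_l)u_l,
\]
so that the $N_l^{-\f12}$ parts of the $c_l$ will come from the first sum and the integer parts from the second.

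For the variance sum I would use $\Var\brack{w_l}\le\E\brack{\norm{w_l}_V^2}=\norm{w_l}^2$, hence $\sqrt{\Var\brack{w_l}}\le\norm{w_l}$, and then estimate $\norm{u_l-P_l u_l}$ by the triangle inequality. Since $P_1=0$ the level $l=1$ gives $\norm{w_1}=\norm{u_1}\le\norm{u}+\norm{u-u_1}$. For $l\ge2$ I would write $\norm{u_l-P_l u_l}\le\norm{u_l-u_{l-1}}+\norm{u_{l-1}}+\norm{P_l u_l}$, and, using that the Galerkin projection \eqref{eq:proj} is a $V$-contraction so $\norm{P_l u_l}\le\norm{u_l}$, together with $\norm{u_j}\le\norm{u}+\norm{u-u_j}$ and $\norm{u_l-u_{l-1}}\le\norm{u-u_l}+\norm{u-u_{l-1}}$, obtain $\sqrt{\Var\brack{w_l}}\le2\paren{\norm{u}+\norm{u-u_l}+\norm{u-u_{l-1}}}$.

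For the bias term the projection destroys the telescoping identity $\E\brack{\EML{L}(u)}=\E\brack{u_L}$ of ordinary MLMC, so I must track the projection defect. With $u_L=\sum_{l=1}^L(u_l-u_{l-1})$ and $u_0=0$ I would rewrite
\[
\E\brack{\EPML{L}(u)-u}=\E\brack{u_L-u}+\sum_{l=1}^L\E\brack{u_{l-1}-P_l u_l},
\]
where the $l=1$ summand vanishes because $u_0=0$ and $P_1=0$. Using nestedness $V_{l-1}\subset V_l$, the projection fixes $V_{l-1}$, so $u_{l-1}-P_l u_l=P_l(u_{l-1}-u_l)$ and, via $\norm{\E\brack{\cdot}}\le\norm{\cdot}$ and the contraction property, $\norm{\E\brack{u_{l-1}-P_l u_l}}\le\norm{u_l-u_{l-1}}\le\norm{u-u_l}+\norm{u-u_{l-1}}$; likewise $\norm{\E\brack{u_L-u}}\le\norm{u-u_L}$. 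Summing yields the bias bound $\norm{u-u_1}+2\sum_{l=2}^L\norm{u-u_l}$.

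The final step is bookkeeping: substitute the two estimates, reindex the overlapping cross sums (replacing $\sum_l f(l)\norm{u-u_{l-1}}$ by $\sum_l f(l+1)\norm{u-u_l}$), and collect the coefficient of $\norm{u}$ and of each $\norm{u-u_l}$; a direct check shows these reproduce exactly $c_0,c_1,\dots,c_L$. I expect the main obstacle to be the bias estimate rather than the variance estimate: because there is no exact telescoping here, one must correctly isolate the defect terms $u_{l-1}-P_l u_l$, exploit $P_l u_{l-1}=u_{l-1}$, and then merge the resulting double sums into the precise coefficient pattern, which is the delicate part of the argument.
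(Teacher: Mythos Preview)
Your argument is correct and arrives at exactly the stated constants, but it follows a different route from the paper's proof. The paper does not work directly with the bias--variance decomposition of $\EPML{L}(u)$; instead it inserts the MLMC estimator and uses the triangle inequality
\[
\norm{\D^\PML_L(u)} \le \norm{\E\brack u - \EML{L}(u)} + \norm{\EML{L}(u)-\EPML{L}(u)}.
\]
The first term is handled by the known MLMC error expansion, while the second is controlled by expanding $P_l\,\EMC{N_l}(u_l)-\EMC{N_l}(u_{l-1})$ term by term, using $P_l=I$ on $V_{l-1}$ and the MC bound \eqref{eq:MC}. Your approach, by contrast, stays with the PMLMC mean square identity, bounds $\sqrt{\Var\brack{w_l}}$ directly, and isolates the bias defect $\E\brack{u_{l-1}-P_l u_l}=\E\brack{P_l(u_{l-1}-u_l)}$. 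Both arguments ultimately rest on the same two properties of $P_l$ (it fixes $V_{l-1}$ and is a $V$-contraction), and the final bookkeeping is essentially identical. The advantage of your route is that it is self-contained and does not invoke the MLMC estimator at all; the paper's route makes the comparison with MLMC explicit, which motivates the method. As a side remark, your variance step for $l\ge2$ could be sharpened: since $P_l$ is the $V$-orthogonal projection onto $V_{l-1}$, best approximation gives $\norm{(I-P_l)u_l}\le\norm{u_l-u_{l-1}}$ directly, yielding strictly smaller constants than the claimed $c_l$; your cruder triangle inequality is what reproduces them exactly.
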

\begin{proof}
The difference between $P_l  \EMC{N_l}(u_l)$ and $\EMC{N_{l}}(u_{l-1})$ is
\begin{eqnarray*}
&&P_l  \EMC{N_l}(u_l) - \EMC{N_{l}}(u_{l-1})
\nonumber
\\
&=&
P_l  \EMC{N_l}(u_l) - P_l \E\brack{ u_l }
  +P_l \E\brack{ u_l} - \E\brack{ u_{l-1}} +\E\brack{ u_{l-1}} -\EMC{N_{l}}(u_{l-1})
  \nonumber
  \\
  &=&-P_l  \D^\MC_{N_l}(u_l)  +  P_l \E\brack{u_l -u_{l-1} }+\D^\MC_{N_{l}}(u_{l-1})
\end{eqnarray*}
for $l=2,\ldots,L$
since $P_l=I$ on $V_{l-1}$. 
Then the norm $\|{\EML{L}(u) - \EPML{L}(u)}\|$ can be bounded by
\begin{eqnarray*}
\norm{\EML{L}(u) - \EPML{L}(u)}
&=&
\norm{\sum_{l=2}^L \paren{P_l  \EMC{N_l}(u_l) - \EMC{N_{l}}(u_{l-1}) }}
 \nonumber
\\
&\le&
 \sum_{l=2}^L 
 \paren{
 \norm{ \D^\MC_{N_l}(u_l)  }
+
 \norm{ \E\brack{u_l-u_{l-1}}  }
+
 \norm{  \D^\MC_{N_l}(u_{l-1})  } }
 \nonumber
 \\
&\le&
 \sum_{l=2}^L 
 \paren{
 \norm{ \D^\MC_{N_l}(u_l)  }
+
 \norm{ {u_l-u_{l-1}}  }+
 \norm{  \D^\MC_{N_l}(u_{l-1})  } },
\end{eqnarray*}
and 
the error $\D^\PML_L(u)$ 
can be bounded by
\begin{eqnarray*}
\norm{\D^\PML_L(u)}
&\le&
{
\norm{\E\brack u -\EML{L}(u)}
 + \norm{\EML{L}(u)-\EPML{L}(u)} }
\nonumber
 \\
&\le&
\norm{ {u -   u_L} }+
\sum_{l=1}^L \frac1{\sqrt{N_l}}
{\norm{u_l -u_{l-1}}}
   \nonumber
\\
&+&
\sum_{l=2}^L 
 \brace{\frac1{\sqrt{N_l}} \paren{\norm{u_l} + \norm{u_{l-1}} }+%
 \norm{ {u_l-u_{l-1}}  } }
 \nonumber
 \\
&\le&
c_0\norm{u}+\sum_{l=1}^{L} c_l \norm{u-u_l},
\end{eqnarray*}
in terms of $\norm{u}$, $\norm{u-u_l}$ and $\norm{u-u_{l-1}}$
from  expansions of $\norm{u_l}=\norm{u_l-u+u}$ and
$\norm{ {u_l-u_{l-1}}  } = \norm{ {u_l-u+u-u_{l-1}}  }$.
\end{proof}
%
When $\norm{u}$ is a little bit large,
its effect can be negated by increasing
the total number of samples as shown in $c_0$ in \thmref{thm:PML},
since it depends only on the number of samples at each level.
On the other hand, we can decrease the error only when the approximation
error at each level has a good order of convergence from the constant
dependence on $\norm{u-u_l}$.
Furthermore,
if $u_l$ converges to $u$ in mean square, then $\Var\brack{w_l}=\Var\brack{\paren{I-P_l}u_l}$
converges to zero as the level $l$ increases, and fewer samples are needed on finer levels
to estimate $\E\brack{w_l}$.
Compared to the error bound of 
the MLMC method, 
that of 
the PMLMC method has more dependence on 
numbers of samples $N_l$ for $l>1$ to
control the error $\norm{u}$.
The approximate property at the level next to the finest grid should be better to ensure
good rate of convergence.

The computational cost $\C({\EPML{L}(u)})$ of the PMLMC estimator $\EPML{L}(u)$
 is 
\begin{equation*}
\C\paren{\EPML{L}(u)} = \sum_{l=1}^L N_l \C_l,
\end{equation*}
where $\C_l$ 
is the mean computational cost at level $l$ including the projection cost.
After rearranging \eqref{eq:PML} according to
$N_l$,  we obtain the following theorem.
 \begin{thm}
 \label{thm:lmlmc:sampling:cost}
 For given $\eps>0$,
the optimal $N_l$ and the computational cost 
by the PMLMC method
 are 
 \begin{equation}
 \label{eq:lmlmc:sampling:cos}
 \left\{
\begin{array}{l}
N_1 =
{{\eta^2}{\teps^{-2}}}
\paren{
\norm{ {u -   u_{1}} }+\norm{u}  }^{\f23}{\C_1}^{-\f23},
\\
\\
N_l =
{{\eta^2}{\teps^{-2}}}
\brace{2
\paren{
\norm{u-u_{l}}+
\norm{ {u -   u_{l-1}} }+\norm{u}  }}^{\f23}{\C_l}^{-\f23},
\quad l=2,\ldots,L,
\\
\\
\C\paren{\EPML{L}(u)} =
{{\eta^3}{\teps^{-2}}}.
\end{array}
\right.
\end{equation}
Here, the auxiliary variable 
$\teps=\eps - {\norm{u-u_1}}
-
2
\sum_{l=2}^{L} 
{\norm{u-u_l}}$
 should be positive
and another auxiliary variable $\eta$ is
\begin{equation*}
\eta=
\paren{
\norm{ {u -   u_{1}} }+\norm{u}  }^{\f23}{\C_1}^{\f13}
+
\sum_{l=2}^L 
\brace{2\paren{
\norm{u-u_{l}}+
\norm{ {u -   u_{l-1}} }+\norm{u}  }}^{\f23}{\C_l}^{\f13}.
\end{equation*}
\end{thm}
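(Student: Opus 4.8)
The plan is to read Theorem \ref{thm:lmlmc:sampling:cost} as a constrained optimization: minimize the cost $\C(\EPML{L}(u))=\sum_{l=1}^L N_l\C_l$ over the sample sizes $N_l$, subject to keeping the error bound \eqref{eq:PML} of \thmref{thm:PML} below the target $\eps$. First I would regroup the right-hand side of \eqref{eq:PML} by collecting, for each level, the coefficient multiplying $N_l^{-1/2}$. Substituting the explicit constants $c_0,\dots,c_L$ and gathering terms, the $N_l$-independent contributions are precisely $\norm{u-u_1}+2\sum_{l=2}^L\norm{u-u_l}$, which is exactly what is subtracted from $\eps$ to define $\teps$. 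What remains collapses into $\sum_{l=1}^L \alpha_l N_l^{-1/2}$, where $\alpha_1=\norm{u}+\norm{u-u_1}$ and $\alpha_l=2\paren{\norm{u-u_l}+\norm{u-u_{l-1}}+\norm{u}}$ for $l\ge2$. Thus the accuracy requirement $\norm{\D^\PML_L(u)}\le\eps$ is implied by $\sum_{l=1}^L \alpha_l N_l^{-1/2}\le\teps$, which is feasible precisely when $\teps>0$, as the statement notes.

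Next I would solve the reduced problem of minimizing $\sum_l N_l\C_l$ subject to $\sum_l \alpha_l N_l^{-1/2}=\teps$, observing that the constraint is active at any optimum because the cost strictly increases in each $N_l$ while relaxing the constraint. Treating the $N_l$ as continuous positive variables and introducing a Lagrange multiplier $\mu$, the stationarity condition $\C_l=(\mu/2)\,\alpha_l N_l^{-3/2}$ gives $N_l=\beta\,(\alpha_l/\C_l)^{2/3}$ for a single common scalar $\beta=(\mu/2)^{2/3}$. Substituting this back into the active constraint yields $\beta^{-1/2}\sum_l \alpha_l^{2/3}\C_l^{1/3}=\teps$, i.e. $\beta^{1/2}=\eta/\teps$ with $\eta=\sum_l \alpha_l^{2/3}\C_l^{1/3}$, which is exactly the $\eta$ appearing in the statement. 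Hence $\beta=\eta^2\teps^{-2}$, and inserting this into $N_l=\beta(\alpha_l/\C_l)^{2/3}$ reproduces the claimed formulas for $N_1$ and for $N_l$ with $l\ge2$.

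Finally I would obtain the cost by direct substitution: $\C(\EPML{L}(u))=\sum_l N_l\C_l=\beta\sum_l \alpha_l^{2/3}\C_l^{1/3}=\beta\eta=\eta^3\teps^{-2}$, matching the last line of \eqref{eq:lmlmc:sampling:cos}.

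I expect the main obstacle to be purely the bookkeeping in the regrouping step rather than the optimization: each factor $N_l^{-1/2}$ receives contributions from three of the constants, namely from $c_0$, from $c_{l-1}$, and from $c_l$, and one must verify they combine into the clean coefficient $\alpha_l$. The $l=1$ endpoint must be handled separately because $c_0$ weights $N_1^{-1/2}$ by $1$ rather than $2$, which is the source of the asymmetry between the $N_1$ formula and the generic $N_l$ formula. Beyond this, the sample-allocation argument is the standard multilevel Lagrange computation, so the only genuine caveats are that the optimal $N_l$ are real numbers — in practice one rounds up, increasing accuracy at negligible extra cost — and that $\teps>0$ is required for the bound to be meaningful.
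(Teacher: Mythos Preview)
Your proposal is correct and follows exactly the route the paper takes: the paper rearranges the bound \eqref{eq:PML} with respect to the $N_l$ into the same constant part (yielding $\teps$) plus $\sum_l \alpha_l N_l^{-1/2}$ with precisely your coefficients $\alpha_l$, and then invokes the Lagrange multiplier method for $f=\sum_l N_l\C_l$ under the constraint $g=\eps$. Your write-up simply fills in the Lagrange computation that the paper leaves to the reader.
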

\begin{proof}
We can rearrange
\eqref{eq:PML} with respect to $N_l$ as follows
\begin{eqnarray*}
\norm{\D^\PML_L(u)}
&\le&
{\norm{u-u_1}}
+
2
\sum_{l=2}^{L} 
{\norm{u-u_l}}
+
N_1^{-\f12}\paren{\norm{u}+\norm{u-u_1}}
\\
&+&
2\sum_{l=2}^L N_l^{-\f12} \paren{\norm{u-u_{l}}+\norm{u-u_{l-1}}+\norm{u}}
\equiv g\paren{N_1,\ldots,N_L}.
\end{eqnarray*}
Set $f\paren{N_1,\ldots,N_L}=\sum_{l=1}^L N_l \C_l$.
The Lagrange's multiplier method 
\cite{arfken:1985} 
for $f$ under the constraint $g=\eps$,
gives us the result.
\end{proof}
From the condition on $\teps$, the natural inequalities $\norm{u-u_l}<\eps$ 
must hold for all $l=1,\ldots, L$.
We can give a concrete version of \thmref{thm:lmlmc:sampling:cost} 
by specifying the finite element space
as follows.
\begin{cor}
Let $V_l$ have only linear elements, i.e, 
 $\norm{u-u_l}=\bigO(h_l)$, or $\norm{u-u_l}\le C h_l$,
 and $h_l=2 h_{l+1}$ for $l=1,\ldots, L-1$,
 where $C$ is independent of $h_l$.
 Under the condition 
 $h_1 < {C^{-1}}{\eps}$, 
 we can make the following estimation
 \begin{equation*}
\C\paren{\E_{L}^\PML(u)}
\le 
4^{1+d}  \eps^{-2(1+d)} C^{2d} C'
 \brack{ \paren{\f\eps2+\norm{u}}^{\f23}
+
\sum_{l=2}^L \paren{\f{3\eps}{2^{l}} + \norm{u}}^{\f23} 2^{\f{2(1+ld)}{3}}}^3,
\end{equation*}
where 
the mean complexity at level $l$ is
$\C_l\le C' h_l^{-2d}$ 
for another constant $C'$ different from $C$
and the space dimension $d=1,2$.
Thus the complexity  increases in proportion to the power $2+2d$ of
the inverse of the desired error $\eps$.
 \end{cor}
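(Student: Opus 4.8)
The plan is to specialize \thmref{thm:lmlmc:sampling:cost} by feeding the concrete linear finite element rate and the dyadic mesh hierarchy into the abstract identity $\C(\EPML{L}(u)) = \eta^3\teps^{-2}$. First I would record the consequences of the hypotheses: from $h_l = 2h_{l+1}$ one gets $h_l = 2^{-(l-1)}h_1$, so the approximation bound $\norm{u-u_l}\le C h_l$ becomes $\norm{u-u_l}\le C\,2^{-(l-1)}h_1$, and the per-level work bound reads $\C_l \le C' h_l^{-2d} = C' h_1^{-2d}\,2^{2d(l-1)}$. The hypothesis $h_1 < C^{-1}\eps$ only guarantees $\norm{u-u_1}<\eps$, so that $\teps$ is meaningfully defined; the sharper working scale $h_1 \sim \eps/(2C)$ is what produces the stated constants, since then $Ch_1\le\eps/2$ controls the biases while $h_1^{-2d} = (2C/\eps)^{2d} = 4^d C^{2d}\eps^{-2d}$ supplies the extra $\eps^{-2d}$ together with the factor $4^d C^{2d}$.

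Next I would treat the two factors $\teps^{-2}$ and $\eta^3$ separately. For $\teps = \eps - \norm{u-u_1} - 2\sum_{l=2}^{L}\norm{u-u_l}$, I would insert the bias bounds and sum the geometric series in $2^{-(l-1)}$ to show that, for $h_1$ at this scale, the accumulated bias stays below $\eps/2$, whence $\teps \ge \eps/2$ and $\teps^{-2}\le 4\eps^{-2}$. For $\eta$ I would substitute $\C_l^{1/3}\le (C')^{1/3}h_1^{-2d/3}\,2^{2d(l-1)/3}$ and pull the common factor $(C'h_1^{-2d})^{1/3}$ out of the whole sum. The $l=1$ summand then contributes $(\norm{u-u_1}+\norm u)^{2/3}\le(\eps/2+\norm u)^{2/3}$, while for $l\ge 2$ the inner quantity $2(\norm{u-u_l}+\norm{u-u_{l-1}}+\norm u)$ is handled via $h_{l-1}+h_l = 3h_l$, giving $\norm{u-u_l}+\norm{u-u_{l-1}}\le 3Ch_l\le 3\eps\,2^{-l}$ and hence a summand bounded by $2^{2/3}(3\eps/2^l+\norm u)^{2/3}$. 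Combining the residual powers of two as $2^{2/3}\cdot 2^{2d(l-1)/3}=2^{(2+2dl-2d)/3}$ and enlarging the exponent to $2^{2(1+ld)/3}$ (which only loosens the bound) reproduces exactly the bracket $[\,\cdot\,]$ of the statement; cubing yields $\eta^3 \le C'\,4^dC^{2d}\eps^{-2d}\,[\,\cdot\,]^3$.

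Finally, multiplying $\eta^3$ by $\teps^{-2}\le 4\eps^{-2}$ collects the prefactor $4^{1+d}C^{2d}C'\eps^{-2(1+d)}$ in front of the cubed bracket, which is precisely the asserted estimate; since the bracket stays bounded as $\eps\to 0$ (each term tends to the $\eps$-free expression in $\norm u$), the cost scales like $\eps^{-(2+2d)}$. I expect the main obstacle to be the lower bound on $\teps$: because the optimal-sampling formulas of \thmref{thm:lmlmc:sampling:cost} are valid only while $\teps>0$, one must check that fixing $h_1$ at the scale $\eps/(2C)$ genuinely keeps the telescoped bias $\norm{u-u_1}+2\sum_{l\ge 2}\norm{u-u_l}$ safely below $\eps$, so that $\teps^{-2}\le 4\eps^{-2}$ is legitimate. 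The $l$-dependent powers of two in $\eta$ must also be tracked with care so that their exponent collapses cleanly to $2(1+ld)/3$; the remaining steps are routine geometric-series bookkeeping.
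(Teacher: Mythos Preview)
Your plan is exactly the paper's: choose $h_1=\eps/(2C)$ and substitute the resulting bounds on $\norm{u-u_l}$ and $\C_l$ into the cost identity $\C(\EPML{L}(u))=\eta^3\teps^{-2}$ of \thmref{thm:lmlmc:sampling:cost}. The paper's own argument is in fact even terser than yours---it simply says ``insert these values''---so the decomposition into an $\eta^3$ factor and a $\teps^{-2}$ factor, and the bookkeeping you describe for the bracket, are the right moves.

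The one place where your outline does not go through as written is the step ``the accumulated bias stays below $\eps/2$, whence $\teps\ge\eps/2$.'' With $h_1=\eps/(2C)$ one has $\norm{u-u_l}\le\eps/2^l$, so
\[
\norm{u-u_1}+2\sum_{l=2}^{L}\norm{u-u_l}\ \le\ \frac{\eps}{2}+2\sum_{l=2}^{L}\frac{\eps}{2^l}\ =\ \frac{\eps}{2}+\eps\Bigl(1-\frac{1}{2^{L-1}}\Bigr),
\]
which is \emph{at least} $\eps$ for $L\ge2$, not at most $\eps/2$; hence $\teps\ge\eps/2$ cannot be concluded from this choice alone. You correctly anticipated this as ``the main obstacle,'' and indeed the paper's proof does not verify it either. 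To recover the stated prefactor $4^{1+d}$ literally one would need a slightly smaller $h_1$ (or a direct bound on $\teps$ using the actual, not the worst-case, $\norm{u-u_l}$); either way the asymptotic order $\eps^{-(2+2d)}$ is unaffected, which is the substantive content of the corollary.
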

 \begin{proof}
 Since $\norm{u-u_1}<\eps$ should hold,
the maximum mesh size $h_1$ at the coarsest grid would satisfy $h_1 < {C^{-1}}{\eps}$,
i.e., $h_L < 2^{1-L}{C^{-1}}{\eps}$. 
Choose $h_L=2^{-L}{C^{-1}}{\eps}$, or $h_1=2^{-1}{C^{-1}}{\eps}$, then
the resulting matrix to solve the problem is sparse. 
Inserting these values into
\eqref{eq:lmlmc:sampling:cos} completes the proof.
\end{proof}

\subsection{Completion of \lemref{lem:var:proj} using
Variance from Linear Interpolation}
\label{subsec:var}

Let $K$ be  a triangle in a triangulation $\T_{l-1}$ to form a piecewise
linear approximation space $V_{l-1}$. 
We make $\T_l$ by making 
4 sub-triangles $\Set{K_i}_{i=1}^4$ from
vertices $\Set{\vertex_i}_{i=1}^3$
and mid points $\Set{\midpoint_i}_{i=1}^3$ on edges of the triangle
$K$ as shown in \figref{fig:triangle}. Here, 
{\em mid points\/} are defined by
\begin{equation*}
\midpoint_k=\f{\vertex_i+\vertex_j}{2},\quad
k\ne i,\,i\ne j,\, j\ne k.
\end{equation*}
Any point $x$ in the triangle $K$ is expressed by
\begin{equation*}
x=\sum_{i=1}^3 \weight_i(x) \, \vertex_i,
\quad
\sum_{i=1}^3 \weight_i(x)=1,
\end{equation*}
where 
$\Set{\weight_i(x)}_{i=1}^3\subset\R$ are 
{\em weights\/} at $x$ in $K$,
called 
{\em barycentric coordinates\/},
determined by
proportional lengths as illustrated in \figref{fig:triangle}.
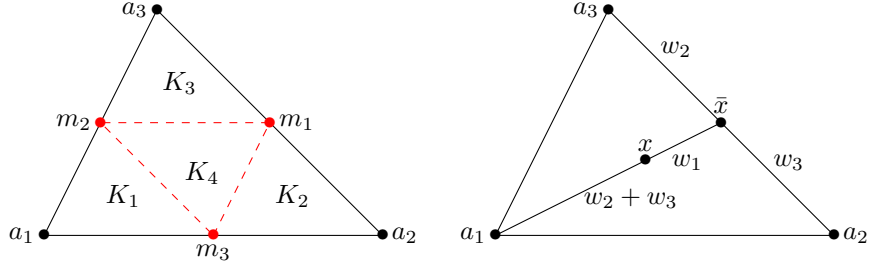
\begin{figure} [h!tbp]
\centering
	\begin{tikzpicture}[scale=1.5]
	\draw (0,0) -- (3,0) -- (1,2) -- (0,0);
	\foreach \Point in {(0,0), (3,0), (1,2)}{\node at \Point {\textbullet};}
	\foreach \Point in {(1.5,0), (2,1), (0.5,1)}{\node[red] at \Point {\textbullet};}
	\draw[red,dashed] (1.5,0) -- (2,1) -- (0.5,1) -- (1.5,0);
	\node[left] at (0,0) {$\vertex_1$};
	\node[right] at (3,0) {$\vertex_2$};
	\node[left] at (1,2) {$\vertex_3$};
	\node[below] at (1.5,0) {$\midpoint_3$};
	\node[right] at (2,1) {$\midpoint_1$};
	\node[left] at (0.5,1) {$\midpoint_2$};
	\node at (0.7,0.35) {$K_1$};
	\node at (2.2,0.35) {$K_2$};
	\node at (1.2,1.35) {$K_3$};
	\node at (1.4,0.55) {$K_4$};
	\draw (4,0) -- (7,0) -- (5,2) -- (4,0);
	\draw (4,0) -- (6,1);
	\foreach \Point in {(4,0), (7,0),(5,2),(6,1), (5.33,0.67)}{\node at \Point {\textbullet};}
	\node[left] at (4,0) {$\vertex_1$};
	\node[right] at (7,0) {$\vertex_2$};
	\node[left] at (5,2) {$\vertex_3$};
	\node[right,above] at (6,1) {$\bar x$};
	\node[above] at (5.33,0.67) {$x$}; 
	\node[below] at (5.2,0.5) {$w_2+w_3$}; 
	\node[below] at (5.7,0.8) {$w_1$}; 
	\node[right,above] at (5.6,1.5) {$w_2$};
	\node[right,above] at (6.6,0.5) {$w_3$};
	\end{tikzpicture}
\centering 
\caption{In {\em Left\/}, vertices $\Set{\vertex_i}_{i=1}^3$ and 
mid points $\Set{\midpoint_i}_{i=1}^3$ on edges of a triangle $K$ in $\T_{l-1}$
and
sub-triangles $\Set{K_i}_{i=1}^4\subset \T_l$ are shown in
black and red dots.
In {\em Right\/}, weights $\Set{\weight_i}_{i=1}^3$ are drawn
as proportional lengths
 for a point $x$ with an auxiliary point $\bar x$
on the opposite edge to the vertex $\vertex_1$.
}
\label{fig:triangle}
\end{figure}
For $u(\cdot;\om)\in V_l$, its linear projection $P u(\cdot;\om)\in V_{l-1}$ 
on a triangle $K\in\T_{l-1}$
is
\begin{equation}
\label{eq:proj:K}
Pu\big|_K(x;\om)=\sum_{i=1}^3 \weight_i(x) u(\vertex_i;\om),
\end{equation}
by the definition \eqref{eq:proj}.
Since $u$ is piecewise linear,
we can express $u$ on $K$ as
\begin{equation*}
u\big|_K(x;\om)=\sum_{j=1}^4 u\big|_{K_j}(x;\om)
=\sum_{j=1}^4 \sum_{i=1}^3 \weight_{ij}(x) u(\vertex_{ij};\om)
\end{equation*}
using $\Set{K_i}_{i=1}^4$ in \figref{fig:triangle}, 
with ordered points $\{\vertex_{ij}\}$ and weights $\{\weight_{ij}\}$
\begin{equation*}
\brack{\vertex_{ij}}=
\left[
\begin{array}{cccc}
  \vertex_1& \midpoint_3   & \midpoint_2 & \midpoint_1   \\
  \midpoint_3&   \vertex_2 & \midpoint_1  &\midpoint_2  \\
 \midpoint_2& \midpoint_1  & \vertex_3 & \midpoint_3   
\end{array}
\right],
\quad
\brack{\weight_{ij}}=
\left[
\begin{array}{cccc}
  \weight_{11}& \weight_{12}   & \weight_{13} & \weight_{14}   \\
  \weight_{21}& \weight_{22}   & \weight_{23} & \weight_{24}   \\
  \weight_{31}& \weight_{32}   & \weight_{33} & \weight_{34}   
\end{array}
\right].
\end{equation*}
Note that orientations of sub-triangles are determined by $\vertex_{ij}$ and
weights $\weight_{ij}$ have supports not on the whole $K$, but only on $K_j$,
which  are, for $i,j=1,2,3$,
\begin{eqnarray*}
\weight_{ij}(x) = \brack{2 \weight_i (x)- \delta_{ij}}\,\chi_{j} (x),
\quad
\weight_{i4}(x) = \brack{1 - 2 \weight_i (x)} \,\chi_{4}(x),
\end{eqnarray*}
where 
$\delta_{ij}$ is the Kronecker delta and
$\chi_j(x)$ is the characteristic function on $K_j$.
Using $\chi_j$, 
we express $Pu|_K$ as
\begin{equation*}
Pu\big|_K(x;\om)=\sum_{j=1}^4 \sum_{i=1}^3 
\weight_i(x) \chi_j(x) u(\vertex_i;\om).
\end{equation*}
The deviation $v$ of the difference 
$u-Pu$ and its mean $\E\brack{u-Pu}$
has the form
\begin{equation*}
{v}\big|_K(x;\om)
=
\paren{u-Pu-\E\brack{u-Pu}}\big|_K(x;\om)
=
\sum_{j=1}^4 v\big|_{K_j}(x;\om),
\end{equation*}
where $v|_{K_j}$ are
\begin{eqnarray*}
v\big|_{K_i}
&=&
2\paren{
 {\y_j} \weight_k
+
 {\y_k} \weight_j
},
\quad
i\ne j,\,j\ne k,\,k\ne i,
\\
v\big|_{K_4}
&=&
\sum_{i=1}^3
\brack{
\y_i
-
 {2\y_i}\weight_i
},
\end{eqnarray*}
for $i,j,k=1,2,3$
and
deviations 
$\y_i$ of
mid point values of $u(\cdot;\om)$
defined
by
\begin{equation*}
\y_i=u(\midpoint_i;\om)-Pu(\midpoint_i;\om)
-
\E\brack{u(\midpoint_i;\cdot)-Pu(\midpoint_i;\om)},\quad
i=1,2,3,
\end{equation*}
since $u(\vertex_i;\om)=Pu(\vertex_i;\om)$ 
by the property of $Pu$ from \eqref{eq:proj:K}.
We have 
\begin{equation*}
\int_K v^2\,dx 
=
\sum_{j=1}^4 \int_{K_j}v^2\,dx
\end{equation*}
where
\begin{eqnarray*}
\int_{K_i}v^2\,dx
&=&
\abs{K_i}\times
\f16
\paren{
\y_j^2
+
{\y_k}^2
+
{\y_j}{\y_k}
},
\quad
i\ne j,\,j\ne k,\,k\ne i,
\\
\int_{K_4}v^2\,dx
&=&
 \abs{K_4}
\times
\f16
\paren{
\sum_{i=1}^3 
{\y_i}^2
+
\sum_{i<j} \y_i\y_j 
},
\end{eqnarray*}
for $i,j,k=1,2,3$.
Here, $\abs{K_i}$ denotes the area of $K_i$ and has the same value, 
a quarter of $\abs{K}$ by
mid point rule. 
For $H^1(D)$ norm, we calculate $\Grad v$ on $K_i$ as
\begin{eqnarray*}
\Grad v\big|_{K_i}
&=&
2\paren{\y_j \Grad\weight_k + \y_k \Grad\weight_j},
\quad
i\ne j,\,j\ne k,\,k\ne i,
\\
\Grad v\big|_{K_4}
&=&
-2\sum_{i=1}^3
{\y_i} \Grad\weight_i,
\end{eqnarray*}
for $i,j,k=1,2,3$.
Their integrals are
computed 
by
\begin{eqnarray*}
\int_{K_i} \abs{\Grad\weight_j}^2\,dx
&=&\f{\abs{K_i}}{4\abs{K}^2} \, \e_j^2
\\
\int_{K_i} \Grad\weight_j\cdot\Grad\weight_k\,dx
&=&
-\f{\abs{K_i}}{4\abs{K}^2}  \,\e_j \e_k \cos\theta_l,
\quad
j\ne k,\,k\ne l,\, l\ne j,
\end{eqnarray*}
where
$\e_i$ is the length of the edge opposite to the vertex $\vertex_i$ and
 $\theta_i$ is the angle at $\vertex_i$ in $K$
 for $j,k,l=1,2,3$.
After arranging the summation, we obtain the following
\begin{eqnarray*}
\int_{K}v^2\,dx
&\le &
\f{5}{24}\abs{K}\sum_{i=1}^3
{\y_i^2}
\le
\regconst \, \diam^2
\sum_{i=1}^3
{\y_i^2},
\\
\int_{K}\abs{\Grad v}^2\,dx
&\le &
\f{1}{\abs{K}}
\sum_{i=1}^3 \e_i^2 
\,
\sum_{i=1}^3
{\y_i^2}
\le
\regconst
\sum_{i=1}^3
{\y_i^2}.
\end{eqnarray*}
Here,
$\regconst \ge 5/{48}$ is a constant for the 
{\em regular triangulation\/} such that
 the diameter $\indiam$ of the incircle 
 and 
 the longest side $\diam$ of $K$
satisfy a relation 
${\diam}/{\indiam}\le \regconst/6$ for all $K\in\T_{l-1}$,
see
Zl\'amal's minimal angle condition \cite{zlamal:1968}, 
Ciarlet's
inscribed ball condition \cite{ciarlet:1978},
and
the maximum angle condition by
Babu\v{s}ka et al.\
\cite{BA:1976} and
Jamet \cite{jamet:1976}.
Now, the expectation of the square of $H^1(D)$-norm of the deviation is
\begin{eqnarray*}
\E\brack{\norm{v}_V^2}
&=&
\E\brack{
\norm{v}_{L^2(D)}^2
+
\norm{\Grad v}_{L^2(D)^2}^2}
\\
&\le&
\regconst
\paren{ \diam^2 +1 } \sum_{K\in\T_{l-1}}\sum_{i=1}^3
\E\brack{\y_i^2}
\\
&=&
\regconst
\paren{ \diam^2 +1 } \sum_{K\in\T_{l-1}}\sum_{i=1}^3
\Var\brack{\paren{u-Pu}(\midpoint_i;\cdot)}.
\end{eqnarray*}
Note that the left-hand side is the variance of $u-Pu$ and
 variances at the right-hand side are  usual variances for discrete values.
The proof of \lemref{lem:var:proj} is done
by replacing $\diam$, $u-Pu$ and $\midpoint_i$ with 
$\diam_{l-1}$, $u_l - P_l u_l$ and $\midpoint_i(K)$, respectively.

\section{Numerical Simulations}
\label{sec:NS}

We use the finite element method in the piecewise linear function
space to approximate the Darcy flow \eqref{eq:darcy}
with a fixed coefficient.
We examine the performance of  MLMC and PMLMC methods in computing
the mean values of pressure fields of the Darcy flow
\eqref{eq:darcy}
 in $D=(0,1)^d$ for $d=1,2$.
 As \cite{CGST:2011}, we use the covariance operator 
 \eqref{eq:cov} to make the logarithm of the coefficient
 $k$ in \eqref{eq:darcy} by the KL expansion
 for the case $\lambda=0.1$ and
$\sigma^2=1$.
 
\subsection{Results in 1D}
\label{subsec:1D}

For the KL expansion, we use $\dx=1/8192$ to generate eigenvalues
and eigenfunctions up to $1000$ modes. 
We use the mesh size $h=1/4096$ for the fine grid 
  and $h=1/2048$ for the coarse grid. 
  The  mean of
$200000$ samples by the MC method
 is regarded as $\E\brack u$ for the comparison
of 
MLMC and PMLMC methods.

We measure the actual CPU time in seconds 
on a $3.07$ GHz Intel Core i7 processor with 12 GB of RAM
using a \Matlab\ 
\cite{MATLAB:2012} code.
It takes about 2 days and 14 hours in CPU time for the calculation
of $200000$ sample solutions.
We observe that the PMLMC method speeds up 
 the correction step at the fine grid
by at least
$12\%$ compared to the MLMC method
as tabulated in {\em Left} of \tabref{tab:cputime:h1error:1D}
and depicted in \figref{fig:cputime:1D}.
The PMLMC method
keeps the almost same order of errors
as shown 
in {\em Right} of \tabref{tab:cputime:h1error:1D}.
We use a fixed number of samples at the fine grid
for MLMC and PMLMC methods as described in
the caption of {\em Right} of \tabref{tab:cputime:h1error:1D}
and illustrate the results in \figref{fig:h1error:1D}.

\begin{table}[h]
\caption{In {\em Left\/},
CPU time (sec) versus $N_2$ (number of samples at the fine grid) 
for MLMC and PMLMC methods in 1D.
In {\em Right\/},
$H^1(D)$ error versus $N$ (total number of samples) 
for MC, MLMC and PMLMC methods in 1D. 
The number of samples at the fine grid is fixed as $N_2=50$
for MLMC and PMLMC methods.}
\label{tab:cputime:h1error:1D}
{\small
\centering
\begin{tabular}{|c|c|c|}
\hline
\multicolumn{3}{|c|}{CPU Time (sec)}
\\
\hline
$N_2$ & MLMC & PMLMC
\\
\hline
50 & 57 &49
    \\
\hline
200 & 227 & 198\\
\hline
 800 & 904 & 780\\
\hline
 3200 & 3626 & 3154\\
\hline
 12800 & 14495 & 12574
\\
\hline
\end{tabular}
\begin{tabular}{|c|c|c|c|}
\hline
\multicolumn{4}{|c|}{$H^1(D)$ error}
\\
\hline
$N$ & MC &MLMC & PMLMC
\\
\hline
100 & $9.0261\times10^{-2}$ &  $1.2547\times10^{-1}$ & $1.2548\times10^{-1}$   \\
\hline
250 & $7.5006\times10^{-2}$ & $7.6602\times10^{-2}$ & $7.6599\times10^{-2}$\\
\hline
 850 & $3.3139\times10^{-2}$ & $3.2706\times10^{-2}$ & $3.2708\times10^{-2}$\\
\hline
 3250 & $2.1194\times10^{-2}$ & $2.2330\times10^{-2}$ & $2.2328\times10^{-2}$\\
\hline
 12850 & $8.5928\times10^{-3}$ & 
$9.3939\times10^{-3}$ & 
$9.3952\times10^{-3}$
\\
\hline
\end{tabular}
}
\end{table}


 \begin{figure}[h]
\centering
{
\includegraphics[totalheight=0.4\textheight]{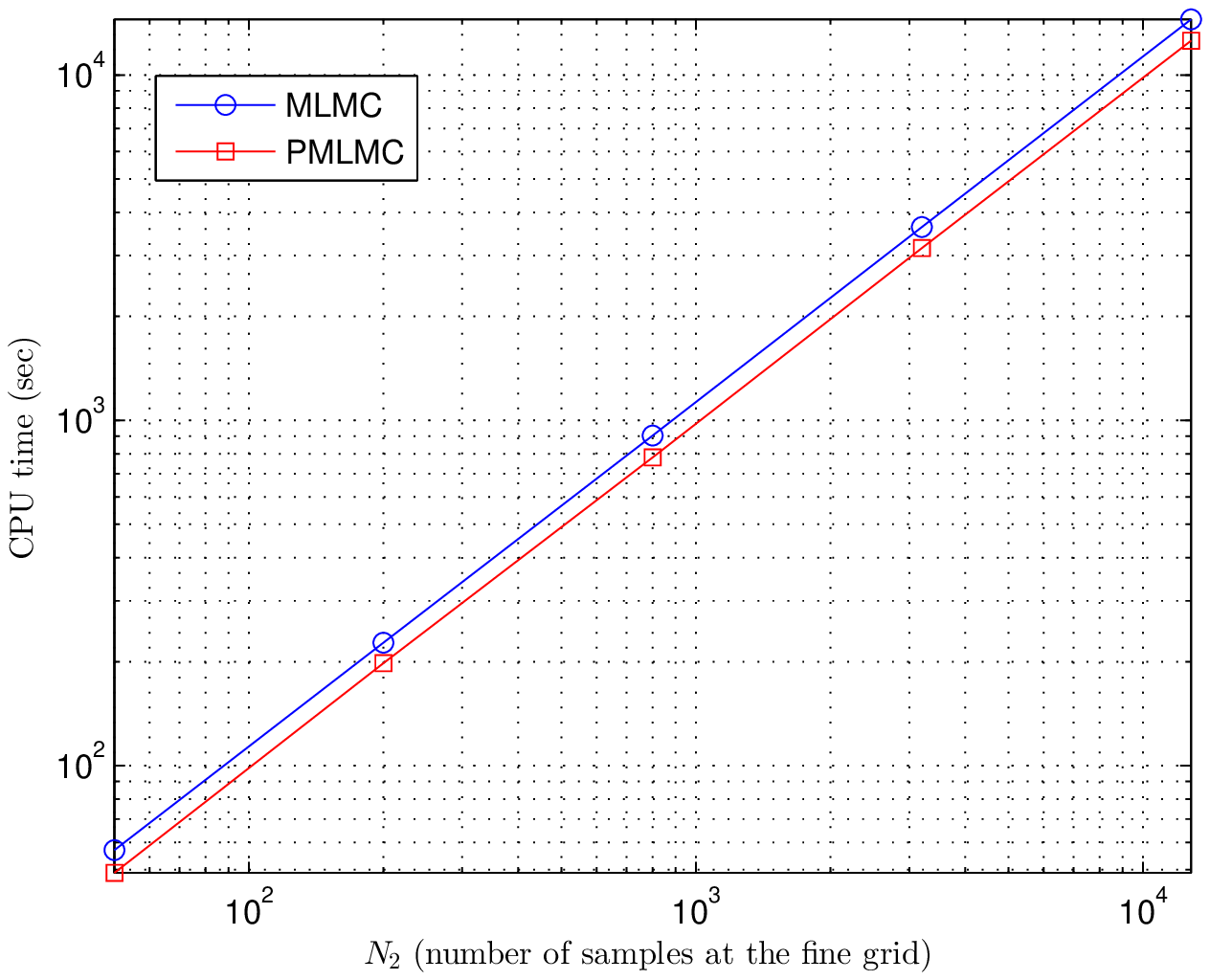}
}
\caption{
CPU time (sec) versus $N_2$ (number of samples at the fine grid) 
for MLMC and PMLMC methods in 1D 
corresponding to {\em Left\/} of \tabref{tab:cputime:h1error:1D}.
}
 \label{fig:cputime:1D}
\end{figure}

 \begin{figure}[h]
\centering
{
\includegraphics[totalheight=0.4\textheight]{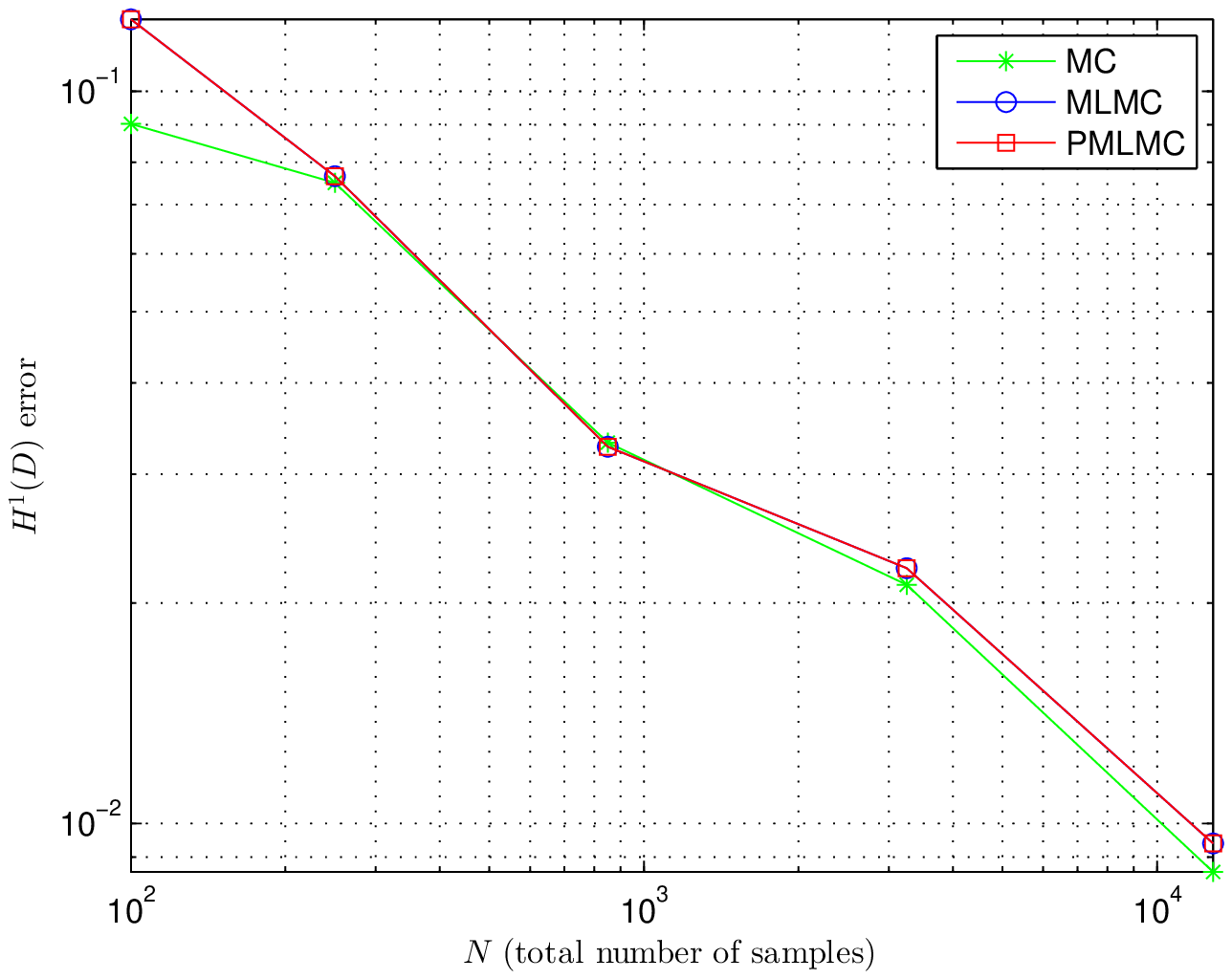}
}
\caption{
$H^1(D)$ error versus $N$ (total number of samples) 
for MC, MLMC and PMLMC methods in 1D 
corresponding to {\em Right\/} of \tabref{tab:cputime:h1error:1D}. 
The number of samples at the fine grid is fixed as $N_2=50$
for MLMC and PMLMC methods.
}
 \label{fig:h1error:1D}
\end{figure}

\subsection{Results in 2D}
\label{subsec:2D}

We make eigenvalues in 2D from 
the tensor products of those in 1D, 
see Cliffe et al.\
\cite{CGST:2011} for details.
We re-order eigenvalues in magnitude and truncate them
up to $1000$ modes in the rectangular grid of size $1/128$. 

We use the triangular mesh to compute approximations.
The coarse grid is
generated by \DistMesh\
\cite{ps:2004} with a mesh size 
$h=0.0442$.
The fine mesh is made from the coarse grid using the mid point rule,
and has a mesh size
$h=0.0221$.
The fine and coarse mesh grids form a hierarchical mesh system and make it possible to
reduce the error by decreasing the unmatched node points between 
coarse and fine grids.
Numbers of elements and nodes  are
$2934$, $1262$ at the coarse grid
and
$9576$, $4917$ 
at the fine grid, which means the fine grid has  more than triple elements and nodes
compared to those in the coarse grid.

We use 
the same machine 
with a \Matlab\ code
in 2D.
It takes 
6 days and 15 hours 
in CPU time for the calculation
of $30000$ sample solutions.
%
The PMLMC method speeds up 
  at the fine grid
by at least
$18\%$
%
as tabulated in {\em Left} of \tabref{tab:cputime:h1error:2D}
and depicted in \figref{fig:cputime:2D}.
In {\em Right} of \tabref{tab:cputime:h1error:2D},
we doubles numbers of samples at  the fine grid
while quadruples those at the coarse grid. We illustrate the results
in \figref{fig:h1error:2D}.
The main reason to increase $N_2$ is due to the
slow convergence rate. We expect that we
can fix $N_2$ when the mesh size is small enough
as in the case of 1D.

\begin{table}[h]
\caption{In {\em Left\/},
CPU time (sec) versus $N_2$ (number of samples at the fine grid) 
for MLMC and PMLMC methods in 2D.
In {\em Right\/},
$H^1(D)$ error versus $N$ (total number of samples) 
for MC, MLMC and PMLMC methods in 2D. 
$N_2$ are $50, 100, 200, 400, 800$
for MLMC and PMLMC methods.}
\label{tab:cputime:h1error:2D}
{\small
\centering
\begin{tabular}{|c|c|c|}
\hline
\multicolumn{3}{|c|}{CPU Time (sec)}
\\
\hline
$N_2$ & MLMC & PMLMC
\\
\hline
50 & 1179 & 957
    \\
\hline
200 & 4728 & 3838
\\
\hline
 800 & 18911 & 15351\\
\hline
 3200 & 75644 & 61405
 \\
\hline
 12800 & 302577 & 245615
\\
\hline
\end{tabular}
\begin{tabular}{|c|c|c|c|}
\hline
\multicolumn{4}{|c|}{$H^1(D)$ error}
\\
\hline
$N$ & MC &MLMC & PMLMC
\\
\hline
100 &$6.7379\times10^{-2}$ &$9.1351\times10^{-2}$ &  $9.1353\times10^{-2}$   \\
\hline
300 &$5.5589\times10^{-2}$ &$6.2854\times10^{-2}$ & $6.2687\times10^{-2}$
\\
\hline
 1000 & $2.4383\times10^{-2}$&$3.7902\times10^{-2}$ & $3.7713\times10^{-2}$\\
\hline
 3600 &$1.0850\times10^{-2}$ &$3.0330\times10^{-2}$ & $3.0255\times10^{-2}$
 \\
\hline
 13600 & $5.2386\times10^{-3}$ & 
$2.9398\times10^{-2}$ & 
$2.9226\times10^{-2}$
\\
\hline
\end{tabular}
}
\end{table}


 \begin{figure}[h]
\centering
{
\includegraphics[totalheight=0.4\textheight]{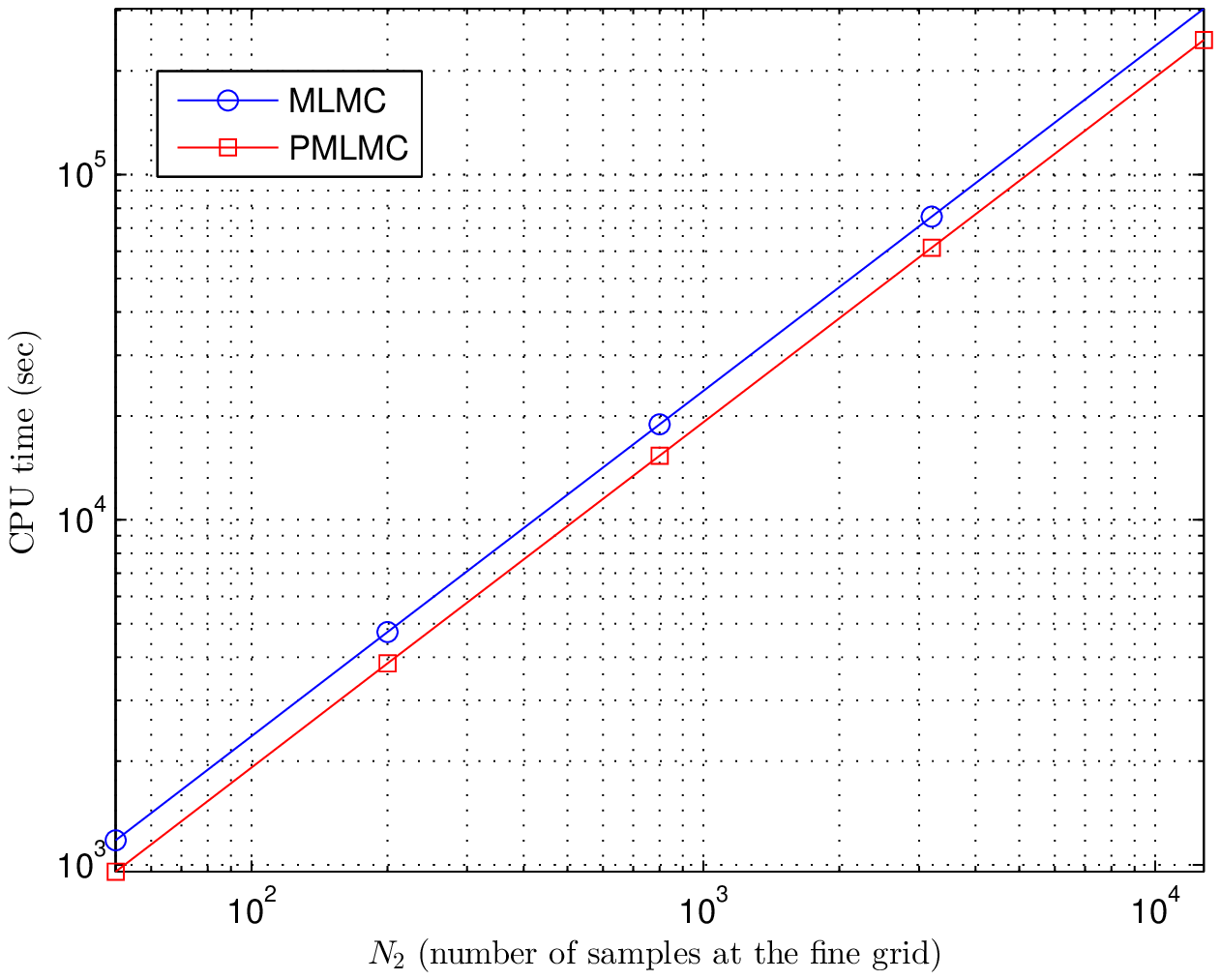}
}
\caption{
CPU time (sec) versus $N_2$ (number of samples at the fine grid) 
for MLMC and PMLMC methods in 2D 
corresponding to {\em Left\/} of \tabref{tab:cputime:h1error:2D}.
}
 \label{fig:cputime:2D}
\end{figure}
 \begin{figure}[h]
\centering
{
\includegraphics[totalheight=0.4\textheight]{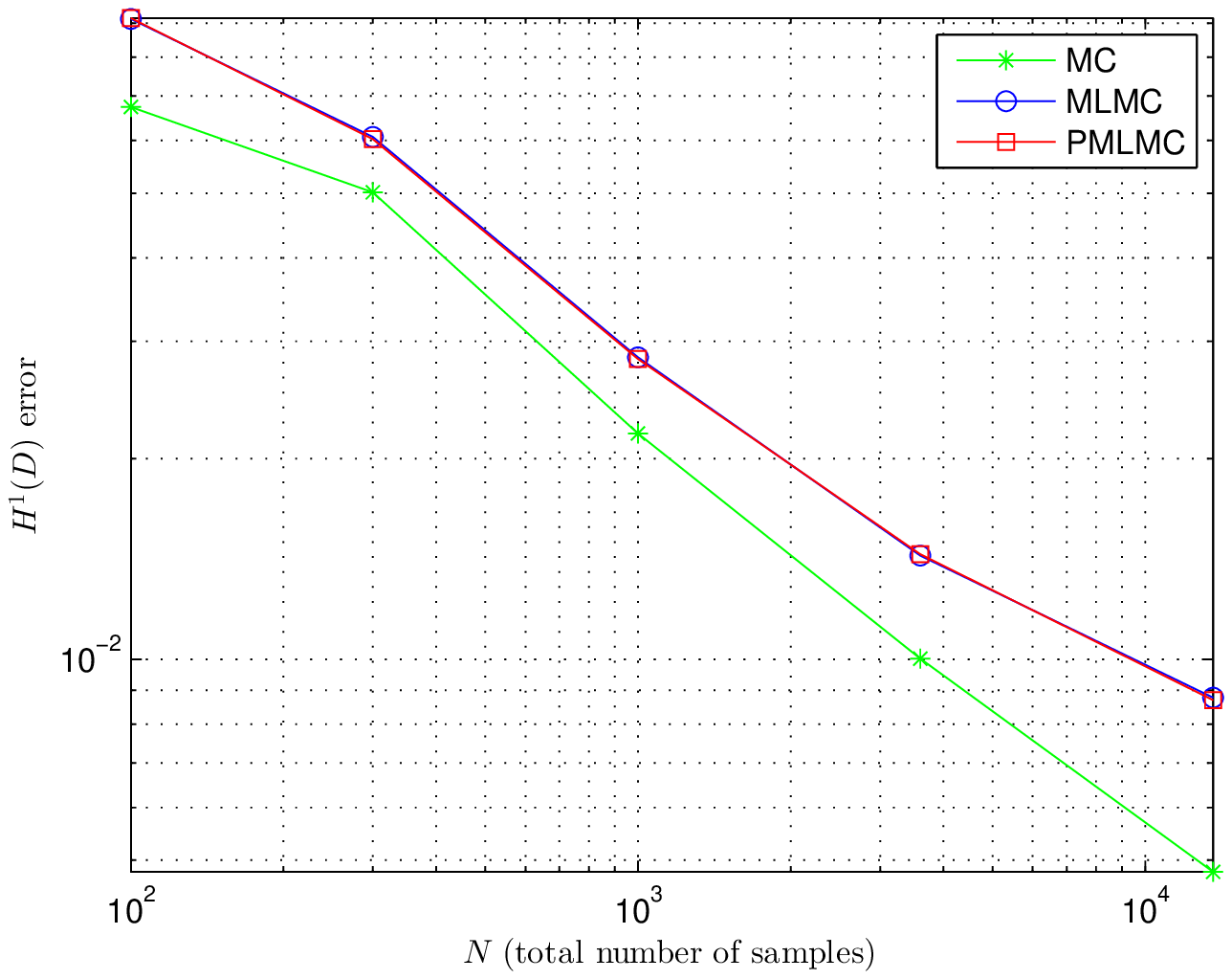}
}
\caption{
$H^1(D)$ error versus $N$ (total number of samples) 
for MC, MLMC and PMLMC methods in 2D 
corresponding to {\em Right\/} of \tabref{tab:cputime:h1error:2D}. 
$N_2$ are $50, 100, 200, 400, 800$
for MLMC and PMLMC methods. 
}
 \label{fig:h1error:2D}
\end{figure}

\subsection{Cost Savings}
 For comparison of CPU time, we tabulate CPU times
for MC, MLMC and PMLMC methods in
\tabref{tab:cputime}.
We illustrate the results in 
\figref{fig:cputime:comp:1D}
and
\figref{fig:cputime:comp:2D}
for 1D and 2D, respectively.
 In 1D,
the
computational cost savings by the PMLMC method are constant,
since
the projection 
occurs only $50$ times.

In the fine grid, the PMLMC method spends almost same time as the MC method does, while
the MLMC method does 14 \%\  
in 1D and 20 \%\
in 2D
more time than the MC method does,
as tabulated
in {\em Left\/} of 
\tabref{tab:cputime:h1error:1D}
 and 
 \tabref{tab:cputime:h1error:2D}
for 1D
and
2D, respectively.
We can say that the PMLMC method saves the computational cost
further than the MLMC method.
This is due to the solution procedure for
the MLMC method in the coarse grid. To the contrary, the PMLMC method 
uses the projection, a cheaper procedure compared to the MLMC method
in the view of cost savings.

\begin{table}[h]
\caption{
CPU time (sec) versus $N$ (total number of samples) 
for MC, MLMC and PMLMC methods in 1D ({\em Left}) and 2D ({\em Right}). 
The number of samples at the fine grid is fixed as $N_2=50$
for MLMC and PMLMC methods.}
\label{tab:cputime}
\centering
\begin{tabular}{|c|c|c|c|}
\hline
\multicolumn{4}{|c|}{CPU Time (sec) in 1D}
\\
\hline
$N$ & MC &MLMC & PMLMC
\\
\hline
100 &$99$ &$68$ &  $60$   \\
\hline
250 &$246$ &$100$ & $93$
\\
\hline
 850 & $837$&$227$ & $218$\\
\hline
 3250 &$3201$ &$734$ & $726$
 \\
\hline
 12850 &$12729$ &$3103$ & $3091$
\\
\hline
\end{tabular}
\begin{tabular}{|c|c|c|c|}
\hline
\multicolumn{4}{|c|}{CPU Time (sec) in 2D}
\\
\hline
$N$ & MC &MLMC & PMLMC
\\
\hline
100 &$1918$ &$1409$ &  $1188$   \\
\hline
300 &$5754$ &$3263$ & $2825$
\\
\hline
 1000 & $19187$&$8425$ & $7508$\\
\hline
 3600 &$68995$ &$24100$ & $22243$
\\
\hline
13600& $262352$& $77108$ & $73220$
\\
\hline
\end{tabular}
\end{table}


 \begin{figure}[h]
\centering
{
\includegraphics[totalheight=0.4\textheight]{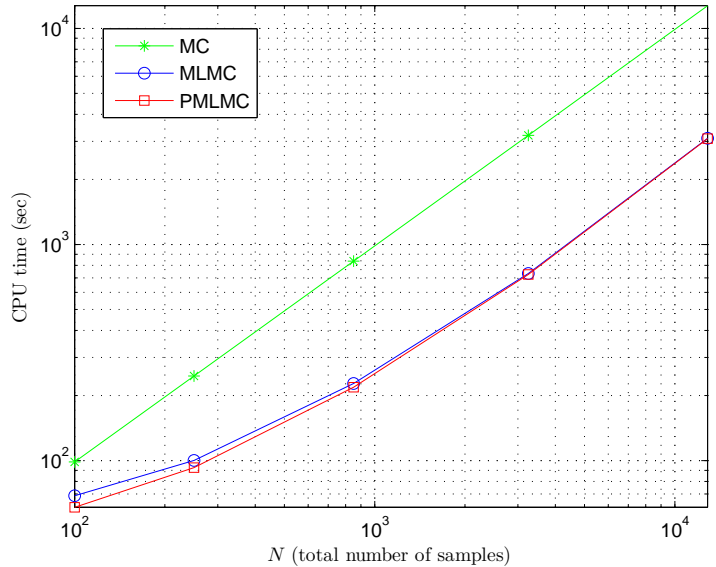}
}
\caption{
CPU time (sec) versus $N$ (total number of samples) 
for MC, MLMC and PMLMC methods in 1D
corresponding to {\em Left\/} of \tabref{tab:cputime}. 
The number of samples at the fine grid is fixed as $N_2=50$.
}
 \label{fig:cputime:comp:1D}
\end{figure}

 \begin{figure}[h]
\centering
{
\includegraphics[totalheight=0.4\textheight]{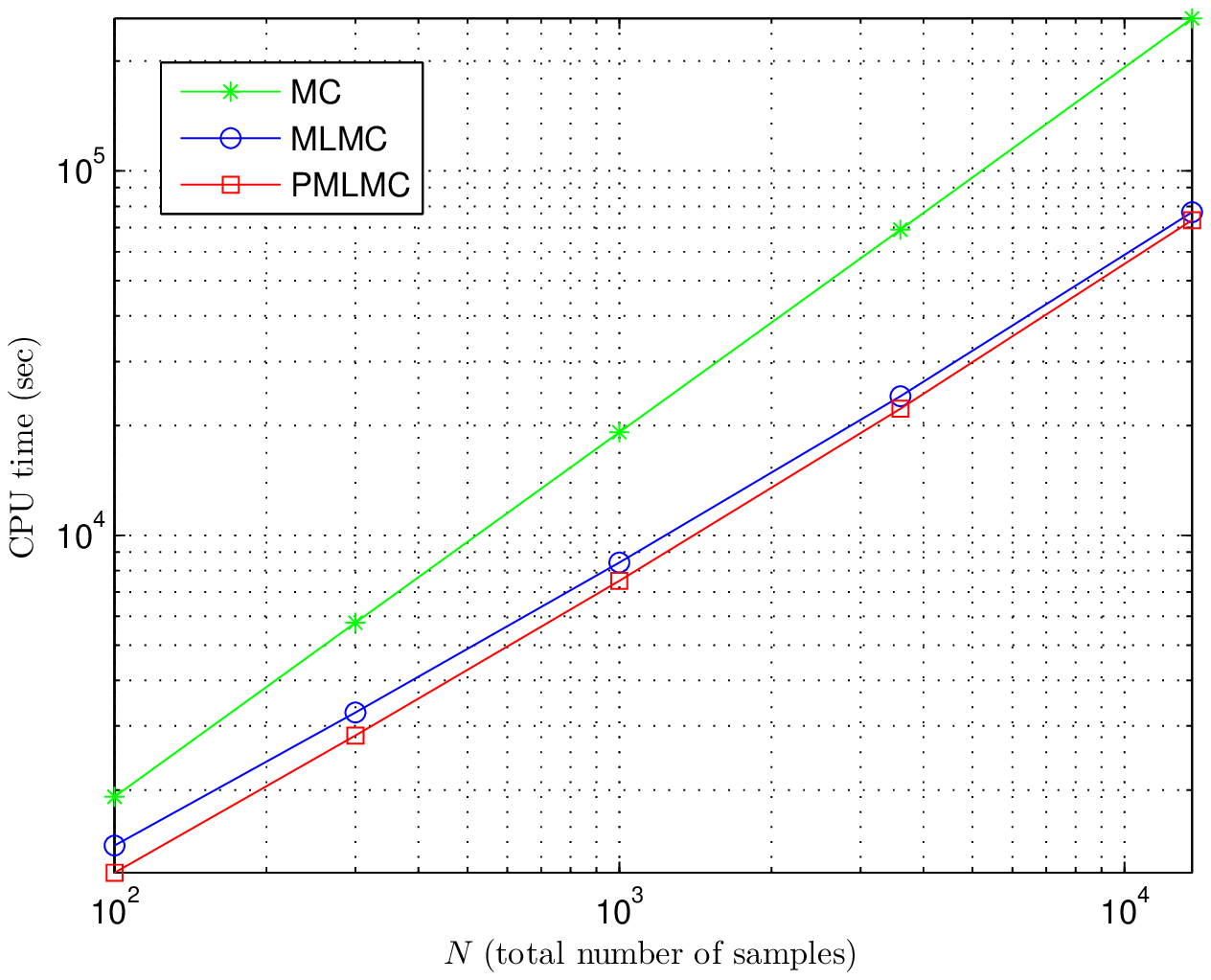}
}
\caption{
CPU time (sec) versus $N$ (total number of samples) 
for MC, MLMC and PMLMC methods in 2D
corresponding to {\em Right\/} of \tabref{tab:cputime}.
$N_2$ are $50, 100, 200, 400, 800$
for MLMC and PMLMC methods. 
}
 \label{fig:cputime:comp:2D}
\end{figure}

\section{Conclusion}
\label{sec:con}

From the error estimations in \secref{sec:order},
we confirm that
the order of convergence of the MC method is $1/2$ which means that
we need quadruple samples to decrease the error in half in
the numerical simulation. 
 MLMC and PMLMC methods are faster than the
MC method and show the same order of convergence $1/2$.
When the most computation occurs at the coarse grid,
the computational cost due to the increase of samples 
does not increase, rather decreases proportional to the ratio
of the number of samples at the coarse grid to that at the fine grid.
In the PMLMC method, we use the projection from the fine grid to the coarse grid
to replace the approximation at the coarse grid in the MLMC method,
which results in 
the reduction of
the computational cost further than the MLMC method.

The PMLMC method upgrades values at mid points of edges in the coarse grid
when we use a hierarchical grid structure under mid point refinement scheme.
This means that we can use small number of samples during the correction procedure
in the fine grid,
since the main structure of the mean value is estimated at the coarse grid
by illustration of convergence of error in
 \figref{fig:h1error:1D}
and \figref{fig:h1error:2D}.
Non-conforming finite element methods can be used for the PMLMC method
since the PMLMC method does not require the inclusion of two consecutive
approximate spaces.

The variance analysis and optimal number of samples with 
illustrations of them 
through  numerical simulations in 1D and 2D are shown for the completeness of
the paper.
In the near future,
we would present the results on 
MLMC and PMLMC methods combined with conforming and non-conforming
finite elements applied to
the Helmholtz 
equation with a random coefficient and wave equation with a white noise.

\bibliographystyle{spmpsci}      
\bibliography{pmlmc}   

\begin{thebibliography}{10}
\providecommand{\url}[1]{{#1}}
\providecommand{\urlprefix}{URL }
\expandafter\ifx\csname urlstyle\endcsname\relax
  \providecommand{\doi}[1]{DOI~\discretionary{}{}{}#1}\else
  \providecommand{\doi}{DOI~\discretionary{}{}{}\begingroup
  \urlstyle{rm}\Url}\fi

\bibitem{abs:2013}
Abdulle, A., Barth, A., Schwab, C.: Multilevel {M}onte {C}arlo methods for
  stochastic elliptic multiscale {PDE}s.
\newblock Multiscale Model. Simul. \textbf{11}(4), 1033--1070 (2013)

\bibitem{asb:1978}
Almroth, B.O., Stern, P., Brogan, F.A.: Automatic choice of global shape
  functions in structural analysis.
\newblock AIAA J. \textbf{16}, 525--528 (1978)

\bibitem{arfken:1985}
Arfken, G.: Mathematical Methods for Physicists, 3rd edn.
\newblock Academic Press (1985)

\bibitem{BA:1976}
Babu\v{s}ka, I., Aziz, A.K.: On the angle condition in the finite element
  method.
\newblock SIAM J. Numer. Anal. \textbf{13}, 214--226 (1976)

\bibitem{BNT:2007}
Babu\v{s}ka, I., Nobile, F., Tempone, R.: A stochastic collocation method for
  elliptic partial differential equations with random input data.
\newblock SIAM J. Numer. Anal. \textbf{45}(3), 1005--1034 (2007)

\bibitem{BNT:2010}
Babu\v{s}ka, I., Nobile, F., Tempone, R.: A stochastic collocation method for
  elliptic partial differential equations with random input data.
\newblock SIAM Rev. \textbf{52}(2), 317--355 (2010)

\bibitem{BTZ:2004}
Babu\v{s}ka, I., Tempone, R., Zouraris, G.E.: Galerkin finite element
  approximations of stochastic elliptic partial differential equations.
\newblock SIAM J. Numer. Anal. \textbf{42}(2), 800--825 (2004)

\bibitem{BTZ:2005}
Babu\v{s}ka, I., Tempone, R., Zouraris, G.E.: Solving elliptic boundary value
  problems with uncertain coefficients by the finite element method: the
  stochastic formulation.
\newblock Comput. Methods Appl. Mech. Engrg. \textbf{194}, 1251--1294 (2005)

\bibitem{balmes:1996}
Balmes, E.: Parametric families of reduced finite element models: theory and
  applications.
\newblock Mech. Syst. Signal. Process. \textbf{10}(4), 381--394 (1996)

\bibitem{bmnp:2004}
Barrault, M., Maday, Y., Nguyen, N.C., Patera, A.T.: An `empirical
  interpolation' method: application to efficient reduced basis discretization
  of partial differential equations.
\newblock C. R. Math. Acad. Sci. Paris \textbf{339}, 667--672 (2004)

\bibitem{br:1995}
Barrett, A., Reddien, G.: On the reduced basis method.
\newblock Z. Angew. Math. Mech. \textbf{75}(7), 543--549 (1995)

\bibitem{BLS:2013}
Barth, A., Lang, A., Schwab, C.: Multilevel {M}onte {C}arlo method for
  parabolic stochastic partial differential equations.
\newblock BIT Numer. Math. \textbf{53}, 3--27 (2013)

\bibitem{BSZ:2011}
Barth, A., Schwab, C., Zollinger, N.: Multi-level {M}onte {C}arlo finite
  element method for elliptic {PDE}s with stochastic coefficients.
\newblock Numer. Math. \textbf{119}, 123--161 (2011)

\bibitem{cinlar:2011}
\c{C}inlar, E.: Probability and Stochastics, vol. XIV.
\newblock Springer (2011)

\bibitem{CST:2013}
Charrier, J., Scheichl, R., Teckentrup, A.L.: Finite element error analysis of
  elliptic {PDE}s with random coefficients and its application to multilevel
  {M}onte {C}arlo methods.
\newblock SIAM J. Numer. Anal. \textbf{51}(1), 322--352 (2013)

\bibitem{ciarlet:1978}
Ciarlet, P.G.: The Finite Element Method for Elliptic Problems.
\newblock North-Holland, Amsterdam (1978)

\bibitem{cc:1960}
Clenshaw, C.W., Curtis, A.R.: A method for numerical integration on an
  automatic computer.
\newblock Numer. Math. \textbf{2}, 197--205 (1960)

\bibitem{CGST:2011}
Cliffe, K.A., Giles, M.B., Scheichl, R., Teckentrup, A.L.: Multilevel {M}onte
  {C}arlo methods and applications to elliptic {PDE}s with random coefficients.
\newblock Comput. Visual. Sci. \textbf{14}, 3--15 (2011)

\bibitem{eeu:2007}
Eiermann, M., Ernst, O.G., Ullmann, E.: Computational aspects of the stochastic
  finite element method.
\newblock Comput. Vis. Sci. \textbf{10}, 3--15 (2007)

\bibitem{fst:2005}
Frauenfelder, P., Schwab, C., Todor, R.A.: Finite elements for elliptic
  problems with stochastic coefficients.
\newblock Comput. Methods Appl. Mech. Engrg. \textbf{194}, 205--228 (2005)

\bibitem{gd:1998}
Ghanem, R., Dham, S.: Stochastic finite element analysis for multiphase flow in
  heterogeneous porous media.
\newblock Transport Porous Med. \textbf{32}, 239--262 (1998)

\bibitem{gs:1991}
Ghanem, R.G., Spanos, P.D.: Stochastic Finite Elements: A Spectral Approach.
\newblock Springer-Verlag (1991)

\bibitem{giles:2008}
Giles, M.B.: Multilevel {M}onte {C}arlo path simulation.
\newblock Oper. Res. \textbf{56}(3), 607--617 (2008)

\bibitem{GS:2014}
Giles, M.B., Szpruch, L.: Antithetic multilevel {M}onte {C}arlo estimation for
  multi-dimensional {SDE}s without {L}\'evy area simulation.
\newblock Ann. Appl. Probab. \textbf{24}(4), 1585--1620 (2014)

\bibitem{glasserman:2003}
Glasserman, P.: {M}onte {C}arlo Methods in Financial Engineering,
  \emph{Applications of Mathematics}, vol.~53.
\newblock Springer (2003)

\bibitem{gwz:2014}
Gunzburger, M.D., Webster, C.G., Zhang, G.: Stochastic finite element methods
  for partial differential equations with random input data.
\newblock Acta Numer. \textbf{23}, 521--650 (2014)

\bibitem{hd:2014}
Hamption, J., Doostan, A.: Compressive sampling of polynomial chaos expansions:
  convergence analysis and sampling strategies (2014).
\newblock ArXiv:1408.4157v3 [math.PR]

\bibitem{heinrich:2001}
Heinrich, S.: Multilevel {M}onte {C}arlo Methods, \emph{Lecture Notes in
  Comput. Sci.}, vol. 2179, pp. 58--67.
\newblock Springer, Berlin (2001)

\bibitem{HK:1985}
Hoeksema, R.J., Kitanidis, P.K.: Analysis of the spatial structure of
  properties of selected aquifers.
\newblock Water Resour. Res. \textbf{21}, 536--572 (1985)

\bibitem{hl:2015}
Hou, T.Y., Liu, P.: A heterogeneous stochastic fem framework for elliptic
  {PDE}s.
\newblock J. Comput. Phys. \textbf{281}, 942--969 (2015)

\bibitem{jamet:1976}
Jamet, P.: Estimations d’erreur pour des elements finis droits presque
  degeneres.
\newblock R.A.I.R.O. Anal. Numer. \textbf{10}, 43--61 (1976)

\bibitem{ks:2011}
Khoromskij, B.N., Schwab, C.: Tensor-structured galerkin approximation of
  parametric and stochastic elliptic {PDE}s.
\newblock SIAM J. Sci. Comput. \textbf{33}(1), 364--385 (2011)

\bibitem{loeve:1977}
Lo\`eve, M.: Probability Theory.~I, \emph{Grad. Texts in Math.}, vol.~45, 4th
  edn.
\newblock Springer-Verlag, New York (1977)

\bibitem{loeve:1978}
Lo\`eve, M.: Probability Theory.~II, \emph{Grad. Texts in Math.}, vol.~46, 4th
  edn.
\newblock Springer-Verlag, New York (1978)

\bibitem{mpt:2002}
Maday, Y., Patera, A.T., Turinici, G.: A priori convergence theory for
  reduced-basis approximations of single-parametric elliptic partial
  differential equations.
\newblock J. Sci. Comput. \textbf{17}, 437--446 (2002)

\bibitem{MATLAB:2012}
MATLAB: version 8.0.0.783 (R2012b).
\newblock The MathWorks Inc., Natick, Massachusetts (2012)

\bibitem{mk:2005}
Matthies, H.G., Keese, A.: Galerkin methods for linear and nonlinear elliptic
  stochastic partial differential equations.
\newblock Comput. Methods Appl. Mech. Engrg. \textbf{194}, 1295--1295 (2005)

\bibitem{mnst:2013}
Migliorati, G., Nobile, F., von Schwerin, E., Tempone, R.: Approximation of
  quantities of interest in stochastic {PDE}s by the random discrete ${L}^2$
  projection on polynomial spaces.
\newblock SIAM J. Sci. Comput. \textbf{35}(3), A1440--A1460 (2013)

\bibitem{mnst:2014}
Migliorati, G., Nobile, F., von Schwerin, E., Tempone, R.: Analysis of discrete
  ${L}^2$ projection on polynomial spaces with random evaluations.
\newblock Found. Comput. Math. \textbf{14}, 419--456 (2014)

\bibitem{ntw:2008:aniso}
Nobile, F., Tempone, R., Webster, C.G.: An anisotropic sparse grid stochastic
  collocation method for partial differential equations with random input data.
\newblock SIAM J. Numer. Anal. \textbf{46}(5), 2411--2442 (2008)

\bibitem{ntw:2008}
Nobile, F., Tempone, R., Webster, C.G.: A sparse grid stochastic collocation
  method for partial differential equations with random input data.
\newblock SIAM J. Numer. Anal. \textbf{46}(5), 2309--2345 (2008)

\bibitem{ps:2004}
Persson, P.O., Strang, G.: A simple mesh generator in matlab.
\newblock SIAM Rev. \textbf{46}(2), 329--345 (2004)

\bibitem{ps:2010}
Pomplun, J., Schmidt, F.: Accelerated a posteriori error estimation for the
  reduced basis method with application to 3d electromagnetic scattering
  problems.
\newblock SIAM J. Sci. Comput. \textbf{32}(2), 498--520 (2010)

\bibitem{prato:1992}
Prato, G.D., Zabczyk, J.: Stochastic equations in infinite dimensions,
  \emph{Encyclopedia of Mathematics and its Applications}, vol.~44.
\newblock Cambridge University Press (1992)

\bibitem{smolyak:1963}
Smolyak, S.A.: Quadrature and interpolation formulas for tensor products of
  certain classes of functions.
\newblock Dokl. Akad. Nauk SSSR \textbf{4}, 240--243 (1963)

\bibitem{stefanou:2009}
Stefanou, G.: The stochastic finite element method: Past, present and future.
\newblock Comput. Methods Appl. Mech. Engrg. \textbf{198}, 1031--1051 (2009)

\bibitem{TJWG:2014}
Teckentrup, A.L., Jantsch, P., Webster, C.G., Gunzburger, M.: A multilevel
  stochastic collocation method for partial differential equations with random
  input data (2014).
\newblock ArXiv:1404.2647v2 [math.NA]

\bibitem{xiu:2010}
Xiu, D.: Numerical methods for stochastic computations: a spectral method
  approach.
\newblock Princeton University Press (2010)

\bibitem{xh:2005}
Xiu, D., Hesthaven, J.S.: High-order collocation methods for differential
  equations with random inputs.
\newblock SIAM J. Sci. Comput. \textbf{27}(3), 1118--1139 (2005)

\bibitem{xk:2002:wiener}
Xiu, D., Karniadakis, G.E.: The wiener-askey polynomial chaos for stochastic
  differential equations.
\newblock SIAM J. Sci. Comput. \textbf{24}, 619--644 (2002)

\bibitem{xk:2003}
Xiu, D., Karniadakis, G.E.: Modeling uncertainty in flow simulations via
  generalized polynomial chaos.
\newblock J. Comput. Phys. \textbf{187}, 137--167 (2003)

\bibitem{zlamal:1968}
Zl\'amal, M.: On the finite element method.
\newblock Numer. Math. \textbf{12}, 394--409 (1968)

\end{thebibliography}

\end{document}